\documentclass[11pt]{article}
\usepackage{mathrsfs}
\usepackage{amsfonts,amsmath,dsfont,amssymb,amsthm,stmaryrd}
\usepackage[fit]{truncate}
\usepackage{hyperref}
\hypersetup{colorlinks=true,pdftitle="",pdftex}

\pdfinfo{ /MathgenSeed (1557386432) }

\setlength{\topmargin}{-0.45 in}     
\setlength{\oddsidemargin}{0.3in}  
\setlength{\evensidemargin}{0.3in} 
\setlength{\textheight}{9in}
\setlength{\textwidth}{6.1in} 
\setlength{\footskip}{0.55in}  

\newtheorem{conj}{Conjecture}[section]
\newtheorem{thm}{Theorem}[section]
\newtheorem{rmk}[conj]{Remark}
\newtheorem{lem}[conj]{Lemma}
\newtheorem{prop}[conj]{Proposition}
\newtheorem{coro}[conj]{Corollary}

\newcommand{\mR}{\mathbb{R}} 

\newcommand{\mZ}{\mathbb{Z}}
\newcommand{\mC}{\mathbb{C}}
\newcommand{\mP}{\mathbb{P}}
\newcommand{\mF}{\mathbb{F}}
\newcommand{\mE}{\mathbb{E}}
\def\calG{{\mathcal G}}


\makeatletter
\newcommand{\opnorm}{\@ifstar\@opnorms\@opnorm}
\newcommand{\@opnorms}[1]{%
  \left|\mkern-1.5mu\left|\mkern-1.5mu\left|
   #1
  \right|\mkern-1.5mu\right|\mkern-1.5mu\right|
}
\newcommand{\@opnorm}[2][]{%
  \mathopen{#1|\mkern-1.5mu#1|\mkern-1.5mu#1|}
  #2
  \mathclose{#1|\mkern-1.5mu#1|\mkern-1.5mu#1|}
}
\makeatother


\begin{document}
\title{A combinatorial approach to small ball inequalities for sums and differences}
\author{Jiange Li, Mokshay Madiman\thanks{Both authors are with the Department of Mathematical Sciences, University of Delaware.
This work was supported in part by the U.S. National Science Foundation through grants CCF-1346564 and DMS-1409504 (CAREER).
Email: {\tt lijiange@udel.edu, madiman@udel.edu}}}
\date{\today}
\maketitle


\begin{abstract}
Small ball inequalities have been extensively studied in the setting of Gaussian processes
and associated Banach or Hilbert spaces. In this paper, we focus on studying small ball probabilities 
for sums or differences of independent, identically distributed random elements taking
values in very general sets. Depending on the setting-- abelian or nonabelian groups,
or vector spaces, or Banach spaces-- we provide a collection of inequalities relating
different small ball probabilities that are sharp in many cases of interest. We prove these 
distribution-free probabilistic inequalities
by showing that underlying them are inequalities of extremal combinatorial nature,
related among other things to classical packing problems such as the kissing number problem.
Applications are given to moment inequalities.
\end{abstract}


\section{Introduction}

Given the ubiquity of sums of independent, identically distributed random variables in probability theory (as well as, indirectly, in many other parts of mathematics), it is natural to look for ways to estimate the
probability that a sum lies in a given measurable set. In general, this can be a rather
complex calculation, and is often intractable. The {\it raison d'etre} of this paper is the fact that
it is often much easier to estimate the probability that a symmetric random variable lies in a
symmetric set; so if we can find a way to relate the desired probability to a probability of this type,
then in many circumstances our task is significantly simplified.

The most general setting in which we can talk about sums (and symmetry) is that of 
group-valued random variables, where the group operation represents summation.
Thus, to state our problem more precisely, consider independent and identically distributed random variables
$X$ and $Y$ taking values in a (possibly nonabelian) topological group with group operation $+$ and its Borel $\sigma$-field;
then our problem is to find good bounds on $\mP(X+Y \in F)$ for arbitrary measurable $F$
in terms of $\mP(X-Y\in K)$ for symmetric (i.e., closed with respect to inversion in the group) 
measurable sets $K$. Since the distribution of $X-Y$ is always symmetric
in that it is invariant with respect to inversion of the random variable, this would provide a reduction of the
form mentioned earlier. We also study a related problem, namely that of estimating 
 $\mP(X-Y \in F)$ for arbitrary measurable $F$
in terms of $\mP(X-Y\in K)$ for symmetric measurable sets $K$.

It might seem that the problem stated is somewhat abstruse; however, it is closely related to a number of
influential streams of recent research. To highlight these connections, we discuss the problem from various   
perspectives.

\noindent{\bf Symmetrization.} 
Symmetrization is a basic and powerful meta-technique that arises in many different guises in
different parts of mathematics.  Instances include Steiner symmetrization in convex geometry and the study of isoperimetric phenomena \cite{BZ88:book, Bur09:tut},
Rademacher symmetrization in probability in Banach spaces and empirical process theory \cite{MZ39, JM75, GZ84, Pol89}, 
the use of rearrangements in functional analysis \cite{LL01:book, AFP00:book}, the study
of partial differential equations \cite{Kaw85:book} and probability theory \cite{Wat83, BS01, WM14}, and others too numerous to mention.
One goal of this paper is to develop a symmetrization technique for estimating small ball probabilities of
sums and differences of independent and identically distributed (henceforth, i.i.d.) random vairables.  
We call these small ball probabilities even though there may be no ``ball'' under consideration (for instance,
no norm in the general group settings that we will consider), because when considered in the context 
of finite-dimensional vector spaces, these are related
to inequalities for the probability of lying in a ball with respect to some norm.

\noindent{\bf Concentration functions.} 
The notion of the concentration function was introduced by P. L\'evy, as a means of describing
in a flexible way the spread or concentration of a real-valued random variable that may not have
finite moments. For a real-valued random variable $X$ with distribution $P_X$, the L\'evy
concentration function is given by $Q(X,s)=\sup_{x\in\mR} P_X([x,x+s]))$ for $s>0$.
While there was already much attention paid to concentration functions in classical 
probability theory (see, e.g., \cite{Lev37:book, Doe39, Kol58, Rog61, Ess66, Ess68, Kes69, HT73:book, Kan76, MR80}), their study received 
renewed attention in recent years  \cite{DFY01, DS05, RV08, FS07, EZ13, NV13} 
because of the relevance of arithmetic structure to the concentration
function of linear combinations of i.i.d. random variables,
as well as applications to random matrix theory. While we do not directly 
address the literature on concentration functions in our note, our results
may be seen as providing bounds on multidimensional or non-Euclidean analogs 
of concentration functions in general spaces. Indeed, a natural way to define 
the concentration function in a general setting, say an abelian group $G$,
would be to set
$$
Q(X,F)=\sup_{x\in G} \mP(X\in x+F),
$$
where the set-valued parameter $F$ plays the role of the parameter $s$ in the definition
$Q(X,s)$ of the concentration function for real-valued random variables. Since the constants 
that appear in our results are packing numbers $N(F,K)$ that are invariant with respect to
translations of $F$, our results directly imply concentration function bounds. For instance,
Theorem~\ref{thm:sum-diff} implies the following statement: {\it If $F$ is an arbitrary measurable subset of an
abelian topological group $G$ and $K$ is a measurable subset of $G$ containing the identity
in its interior, then for any pair $X, Y$ of $G$-valued random variables that are independent and identically distributed,
we have}
\begin{align}\label{eq:conc}
Q(X+Y, F) \leq N(F, K)\cdot Q(X-Y, K) .
\end{align}

\noindent{\bf R\'enyi entropy comparisons.} 
By taking balls of vanishing radius in a metrizable, locally compact abelian group $G$, 
and considering independent random variables $X$ and $Y$ drawn from a distribution on $G$ 
that has a density with respect to the Haar measure, the inequality \eqref{eq:conc}
yields an inequality relating the essential supremum
of the density of $X+Y$ and that of the density of $X-Y$. This may be interpreted as
relating the R\'enyi entropies of order $\infty$ of $X+Y$ and $X-Y$. Inequalities for these
quantities have attracted interest from different communities including information theory and 
convex geometry (see, e.g., \cite{MMX17:1, MMX17:2, MWW17:1, MWW17:2}),
and comparing R\'enyi entropies of $X+Y$ and $X-Y$ in particular is closely related to the study
of more-sums-than-differences sets (see, e.g., \cite{ALM17}) as well as
sum-difference inequalities (see, e.g., \cite{MK10:isit, MK18}) in additive combinatorics.

\noindent{\bf Packing problems/extremal combinatorics.}
In 1995, Alon and Yuster \cite{AY95} showed that for i.i.d. real-valued random variables $X$ and $Y$, 
\begin{align}\label{eq:ay}
\mP(|X-Y|\leq b) < (2\lceil b/a\rceil-1) \cdot \mP(|X-Y| \leq a),
\end{align}
thus answering a question raised by Peres and Margulis. 
They also observed that the optimal constants in such estimates are closely related to 
the kissing number problem, which is a long-standing problem in geometry; 
indeed, the kissing number in $\mR^3$ was a subject of discussion between Isaac Newton 
and David Gregory in 1690's. A similar probabilistic inequality proved by Katona \cite{Kat69} 
is closely related to Tur\'an-type theorems for triangle-free graphs.
It turns out that behind the main results of this paper, which among other things
generalize significantly the inequality \eqref{eq:ay} of \cite{AY95},
are actually statements from extremal combinatorics, which we prove en route to proving
our main results. This strengthens the link between extremal combinatorial phenomena
and probabilistic inequalities, in a much more general setting than that of \cite{AY95}, 
in analogy with similar links developed by Katona in a series of papers (see, e.g., \cite{KS80, Kat85}).

\noindent{\bf Moment inequalities.}
Probability bounds are of course closely related to moment inequalities, and our results
in particular can be used to develop a number of moment inequalities for functions of sums and 
differences of random variables under various assumptions on the distribution and/or the function.
Such inequalities are of intrinsic interest since they serve as tools in a variety of areas.

\noindent{\bf Random walks.}
For $0<a<2b$, the following sharp 
symmetrization inequality for i.i.d. real-valued random variables $X$ and $Y$ is proved in \cite{DLL15}:
\begin{align}\label{eq:dll}
\mP(|X+Y|\leq b)<\lceil2b/a\rceil\cdot \mP(|X-Y|\leq a).
\end{align}
For $a\geq2b$, the estimate still holds with $``\leq"$ in the middle. 
This generalizes the earlier work of Siegmund-Schultze and von Weizs\"acher \cite{SW07}, which considered the special case $a=b$
and used it as a key ingredient in studying  the level crossing probabilities for random walks on the real line. 
The results of this paper contain those of \cite{DLL15}, and although we do not investigate this direction
further here, it is conceivable that our results would be useful in the study of random walks on groups.

Having briefly provided motivation from different points of view, let us say something about our methods, especially
as compared with those of \cite{DLL15}, which focused on random variables taking values 
in a separable Banach space and may be seen as a precursor to this paper.
In exploring small ball inequalities for sums and differences on general groups, the reason we 
study the problem from a combinatorial point of view is two-fold: firstly, 
it seems to be impossible to generalize the analytical technique developed in \cite{DLL15}
to the group setting because it relies essentially on the availability of a dilation
operation on the space, and secondly (and perhaps more importantly), it is reasonable 
to expect certain extremal combinatorial nature underlying these probabilistic inequalities, given that they are independent 
of the probability distributions imposed on our random variables. 

In Section~\ref{sec:comb}, we justify this intuition that distribution-free probabilistic estimates
should be connected to extremal combinatorial phenomena, which essentially follows from the Law of Large Numbers. 
Since our findings, not surprisingly, differ depending on how much structure we assume
of our ambient set, we deploy Sections~\ref{sec:ab}, \ref{sec:nab} and \ref{sec:tvs} respectively to explore what can be said 
for abelian topological groups, nonabelian topological groups, and topological vector spaces. In Section~\ref{sec:tight},
we discuss the tightness of our inequalities. Section~\ref{sec:appln} is devoted to
various applications, including moment inequalities in  normed vector spaces.


\section{A combinatorial perspective on distribution-free inequalities}
\label{sec:comb}
 
In this section, we demonstrate a combinatorial approach, which enables us to prove distribution-free probabilistic inequalities by considering their combinatorial analogs. The idea was originally used by Katona \cite{Kat69} to prove probabilistic inequalities using results from graph theory. 

Let $X$ be a random variable taking values in certain measurable space. Let $F$ and $K$ be two measurable subsets of the $k$-fold product space. Given a sequence $X_1, \cdots, X_m$ of independent copies of $X$, the random variable $T_m(X, F)$ is defined to be
$$
T_m(X, F)=|\{(i_1,\cdots, i_k): i_1\neq\cdots\neq i_k, (X_{i_1},\cdots, X_{i_k})\in F\}|.
$$
Similarly we  can define $T_m(X, K)$. Their combinatorial counterparts can be defined for a deterministic sequence $x_1,\cdots, x_m$. More specifically, we define
$$
T_m(F)=|\{(i_1, \cdots, i_k): i_1\neq\cdots\neq i_k, (x_{i_1}, \cdots, x_{i_k})\in F\}|.
$$
We define $T_m(K)$ in a similar manner. Clearly, $T_m(F)$ and $T_m(K)$ depend on the selection of the deterministic sequence. We will not emphasize this when it is clear from context. 

\begin{prop}\label{prop:prob-dis}
Suppose that there is a function $h_k(m)=o(m^k)$ and an absolute constant $C(F, K)$ such that the inequality 
\begin{align}\label{eq:assumption}
T_m(F)\leq h_k(m)+C(F, K)\cdot T_m(K)
\end{align}
holds for any deterministic sequence $x_1,\cdots, x_m$. Then the following inequality
\begin{align}\label{eq:dis-free}
\mP((X_1, \cdots, X_k)\in F)\leq C(F, K)\cdot\mP((X_1, \cdots, X_k)\in K)
\end{align}
holds for any i.i.d. random variables $X_1, \cdots, X_k$.
\end{prop}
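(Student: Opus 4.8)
The plan is to deduce the distribution-free probabilistic inequality \eqref{eq:dis-free} from the combinatorial hypothesis \eqref{eq:assumption} by a standard ``tensorization plus law of large numbers'' argument. First I would take a sequence $X_1, X_2, \ldots$ of i.i.d.\ copies of $X$ and, for each $m$, apply the hypothesis \eqref{eq:assumption} to the (random) deterministic sequence $X_1(\omega), \ldots, X_m(\omega)$, which is valid pointwise in $\omega$; this yields the almost-sure inequality $T_m(X, F) \leq h_k(m) + C(F,K) \cdot T_m(X, K)$. Taking expectations, and using that there are $m(m-1)\cdots(m-k+1)$ ordered $k$-tuples of distinct indices, each contributing $\P((X_1,\ldots,X_k)\in F)$ by the i.i.d.\ assumption and symmetry, I get
\begin{equation*}
m(m-1)\cdots(m-k+1)\,\mP((X_1,\ldots,X_k)\in F) \leq h_k(m) + C(F,K)\cdot m(m-1)\cdots(m-k+1)\,\mP((X_1,\ldots,X_k)\in K).
\end{equation*}
Dividing through by $m(m-1)\cdots(m-k+1) = m^k(1+o(1))$ and letting $m\to\infty$, the term $h_k(m)/m^k \to 0$ by the hypothesis $h_k(m) = o(m^k)$, and we are left with exactly \eqref{eq:dis-free}.

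The key steps in order are: (i) observe that \eqref{eq:assumption} holds for every realization of $(X_1,\ldots,X_m)$, hence as an inequality between random variables; (ii) compute $\mE[T_m(X,F)]$ by linearity of expectation over the index set $\{(i_1,\ldots,i_k): i_1\neq\cdots\neq i_k\}$, noting each summand $\P((X_{i_1},\ldots,X_{i_k})\in F)$ equals $\P((X_1,\ldots,X_k)\in F)$ because the $X_i$ are i.i.d.\ (so any $k$ of them with distinct indices have the same joint law as the first $k$); (iii) do the same for $T_m(X,K)$; (iv) combine, divide by the falling factorial, and pass to the limit.

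The step requiring the most care, though it is not really an obstacle, is the bookkeeping in step (ii): one must be sure that $C(F,K)$ is genuinely an absolute constant (independent of $m$), so that it survives the division and limit unchanged, and that the falling factorial $m(m-1)\cdots(m-k+1)$ is treated consistently as the exact cardinality of the distinct-index tuple set on both sides, so it cancels cleanly rather than producing spurious lower-order terms. There are no convergence or measurability subtleties beyond this, since everything reduces to a single application of linearity of expectation followed by an elementary limit; this is exactly the ``deterministic phenomenon behind distribution-free estimates'' alluded to in the introduction, and indeed the argument does not even need the full strength of the law of large numbers, only its expectation-level shadow.
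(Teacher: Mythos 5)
Your proposal is correct and follows essentially the same route as the paper's proof: apply the deterministic inequality pointwise to realizations, take expectations, use linearity and the i.i.d.\ assumption to compute $\mE[T_m(X,F)]=(m)_k\,\mP((X_1,\ldots,X_k)\in F)$ (and similarly for $K$), then divide by the falling factorial and let $m\to\infty$. The bookkeeping details you flag (constancy of $C(F,K)$, exact cardinality of the distinct-index set) are handled identically in the paper.
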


\begin{proof}
Since the inequality \eqref{eq:assumption} holds for any deterministic sequences, we have 
$$
T_m(X, F)\leq h_k(m)+C(F, K)\cdot T_m(X, K).
$$
This particularly implies that
\begin{align}\label{eq:expectation}
\mE (T_m(X, F))\leq h_k(m)+C(F, K)\cdot \mE (T_m(X, K)).
\end{align}
Notice that $T_m(X, F)$ can be written as the summation of Bernoulli random variables with the same distribution
\begin{align}\label{eq:rep-txf}
T_m(X, F)=\sum1_{\{(X_{i_1}, \cdots, X_{i_k})\in F\}},
\end{align}
where the summation is taken over all ordered $k$-tuples $(i_1, \cdots, i_k)$ with distinct coordinates. Therefore we have
\begin{align}\label{eq:exp-txf}
\mE (T_m(X, F))=(m)_k\cdot\mP((X_1, \cdots, X_k)\in F),
\end{align}
where the notation $(m)_k$ stands for the product $m(m-1)\cdots(m-k+1)$. Similarly we have
\begin{align}\label{eq:exp-txk}
\mE (T_m(X, K))=(m)_k\cdot\mP((X_1, \cdots, X_k)\in K).
\end{align}
Combining \eqref{eq:expectation}, \eqref{eq:exp-txf} and \eqref{eq:exp-txk}, we have 
$$
\mP((X_1, \cdots, X_k)\in F)\leq \frac{h_k(m)}{(m)_k}+C(F, K)\cdot\mP((X_1, \cdots, X_k)\in K).
$$
Since $m^k$ and $(m)_k$ have the same order for fixed $k$ and large $m$, and $h_k(m)=o(m^k)$, 
the proposition follows by taking the limit $m\rightarrow\infty$.
\end{proof}

Although the above proof is very simple, let us demonstrate the heuristic behind this combinatorial argument. 
We will see that the assumption \eqref{eq:assumption} is not artificial and it has to be true if the inequality \eqref{eq:dis-free} 
holds for all distributions. For simplicity, we denote by $p=\mP((X_1, \cdots, X_k)\in F)$. Using the representation \eqref{eq:rep-txf}, we have
\begin{align*}
\mE\left(\frac{T_m(X, F)}{(m)_k}-p\right)^2 &= \frac{1}{(m)_k^2}\mE\left(\sum(1_{\{(X_{i_1}, \cdots, X_{i_k})\in F\}}-p)\right)^2\\
&= \frac{1}{(m)_k^2}\sum\left(\mE1_{\{(X_{i_1}, \cdots, X_{i_k})\in F\}}\cdot1_{\{(X_{i_1'}, \cdots, X_{i_k'}\in F\}}-p^2\right).
\end{align*}
If the $k$-tuples $(i_1, \cdots, i_k)$ and $(i_1', \cdots, i_k')$ share no common index, we have
$$
\mE\left(1_{\{(X_{i_1}, \cdots, X_{i_k})\in F\}}\cdot1_{\{(X_{i_1'}, \cdots, X_{i_k'})\in F\}}\right)=p^2.
$$
Since there are $o(m^{2k})$ pairs of ordered $k$-tuples $(i_1, \cdots, i_k)$ and $(i_1', \cdots, i_k')$ with common indices, we have
$$
\mE\left(\frac{T_m(X, F)}{(m)_k}-\mP((X_1, \cdots, X_k)\in F)\right)^2\longrightarrow 0,~\text{as}~m\rightarrow\infty.
$$
In particular, we have
$$
\frac{T_m(X, F)}{(m)_k}\xrightarrow{a.s.} \mP((X_1, \cdots, X_k)\in F),~\text{as}~m\rightarrow\infty.
$$
Similar property holds for $T_m(X, K)$. Combining with the inequality \eqref{eq:dis-free}, we have
$$
T_m(X, F)\leq o(m^k)+C(F, K)\cdot T_m(X, K), ~a.s.
$$
Hence for almost all realizations of $X_1, \cdots, X_m$, i.e., deterministic sequences $x_1, \cdots, x_m$, we will have
$$
T_m(F)\leq o(m^k)+C(F, K)\cdot T_m(K).
$$
One may notice that the sequences violating the above inequality depend on the distribution of $X$ and the $o(m^k)$ term may depend on the sequences. However, if the inequality \eqref{eq:dis-free} holds for all distributions, it should be reasonable to expect the validity of \eqref{eq:assumption} for all deterministic sequences.

\section{Small ball inequalities in abelian groups}
\label{sec:ab}

Let $G$ denote an abelian topological group equipped with the Borel $\sigma$-algebra (i.e., the $\sigma$-algebra  generated by the open sets). 
Let $X$ and $Y$ be i.i.d. random variables taking values in $G$. A subset of $G$ is said to be {\it symmetric} if it contains the 
group inverse of each element of this set. 


For two subsets $F, K\subseteq G$, their Minkowski sum $F+K$ is defined as
$$
F+K=\{x+y: x\in F, y\in K\}.
$$
We define the difference set $F-K$ in a similar manner. The generalized entropy number $N(F, K)$ is defined to be the maximal number of elements we can select from $F$ such that the difference of any two distinct elements does not belong to $K$. More precisely, we have
\begin{align}\label{eq:def-nfk}
N(F, K)=\sup\{|S|: S\subseteq F, (S-S)\cap K\subseteq\{0\}\}.
\end{align}
Let $T=\{x_1, x_2,\cdots, x_m\}$ be a multiset (or sequence) of $G$, i.e., the elements of $T$ are selected from $G$ and are not necessary distinct. For any $s\in\mR$, the quantities $T_+(F, s)$ and $T_-(K, s)$ are defined as
\begin{align}\label{eq:def-tfs}
T_+(F, s)=ms+|\{(i, j): i\neq j,  x_i+x_j\in F\}|
\end{align}
and
\begin{align}\label{eq:def-tks}
T_-(K, s)=ms+|\{(i, j): i\neq j,  x_i-x_j\in K\}|.
\end{align}
The relation between these two quantities is given in the following lemma, which is similar in spirit to Lemma 2.1 and Lemma 3.2 in \cite{AY95}.

\begin{lem}\label{lem:tfs-tks}
Suppose that $K$ is a symmetric set with $0\in K$. For $s\geq 2$ and any multiset $T$, we have
$$
T_+(F, s)\leq N(F, K)\cdot T_-(K, 2s).
$$
\end{lem}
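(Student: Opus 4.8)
The plan is to follow the combinatorial strategy of \cite{AY95}: reduce the inequality to the statement that $N(F,K)$ translates of $K$ cover $F$, then run a double-counting that charges ``sum-configurations'' $x_i+x_j\in F$ to ``difference-configurations'' $x_a-x_b\in K$. If $N:=N(F,K)=\infty$ the claim is vacuous, so assume $N$ finite.

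First I would record the covering input behind $N(F,K)$: pick $S\subseteq F$ of maximum size with $(S-S)\cap K\subseteq\{0\}$; then $|S|=N$, and maximality together with the symmetry of $K$ forces $F\subseteq S+K=\bigcup_{z\in S}(z+K)$ (any uncovered point of $F$ could be adjoined to $S$). Since only sums $x_i+x_j$ are relevant one may take $S\subseteq V+V$, where $V=\{x_1,\dots,x_m\}$, writing $z=a_z+b_z$ with $a_z,b_z\in V$; one may also pass to $G\times H$ and perturb repeated values to distinct ones, which leaves $T_+$, $T_-$ and $N(F,K)$ unchanged. Now, for each ordered pair $(i,j)$ with $i\ne j$ and $x_i+x_j\in F$, fix $z=z(i,j)\in S$ with $x_i+x_j\in z+K$; this partitions the sum-pairs into $N$ classes, and in the class of $z$ one has $x_i+x_j-z\in K$, i.e.\ $x_i-(z-x_j)\in K$ (``reflection'' of the second coordinate about $z$). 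The target is to charge every sum-pair, through this reflected configuration, to an honest difference-pair $x_a-x_b\in K$, so that no difference-pair is used more than $N$ times and the leftover ``boundary'' charges (pairs with $i=j$; pairs whose reflected point $z-x_j$ is not realized in $T$; and so on) amount to $O(m)$; summing over the $N$ classes and noting that $T_-(K,2s)$ carries $2ms$ against the single $ms$ in $T_+(F,s)$ — which, once $s\ge 2$, leaves room to absorb an $O(m)$ error — would then give the lemma.

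The hard part is the charging, and it cannot be carried out class by class: a general abelian group has no halving operation, so the reflected points $z-x_j$ need not belong to the multiset and ``$x_i+x_j\in z+K$'' is not literally a difference of two sampled points; worse, within a single class the sum-count can exceed the difference-count by order $m^2$ (take, e.g., values $0$ with multiplicity $2m/3$ and $2$ with multiplicity $m/3$, $K=\{-1,0,1\}$, $z=1$), and a naive index-by-index bound is outright false (a single value whose $F$-complement contains every other value sums into $F$ with everything while barely contributing to the difference-count). The resolution must do the bookkeeping on $V$ with multiplicities, exploiting that the $z$'s are themselves $K$-separated so the translates $z+K$ overlap in a bounded way, and must redistribute charges \emph{globally} across the $N$ classes — it is only after summing over all classes that the difference-count $T_-(K,\cdot)$, which is inflated precisely when $T$ has repeated or clustered values (exactly the regime making the sum-count large), dominates. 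I expect this global redistribution, together with the routine verification that the discarded terms stay $o(m^2)$ and are swamped once $s\ge 2$, to be the technical crux; everything else is bookkeeping.
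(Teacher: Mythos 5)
Your proposal correctly extracts the covering fact that underlies the constant $N(F,K)$ (maximality of a $K$-separated $S\subseteq F$ plus symmetry of $K$ gives $F\subseteq S+K$, and this persists under translation), and you are right that a naive class-by-class charging of sum-pairs to difference-pairs cannot close the argument — your multiplicity example showing a single class's sum-count exceeding the difference-count is a real obstruction. But the proposal then stops at exactly the point where a proof would have to begin: you state that some ``global redistribution'' across the $N$ classes should save the day, while acknowledging you do not know how to carry it out. As written, this is a plan with the central step explicitly missing, not a proof.

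The idea you are missing is that the paper does not run a double-counting over classes at all; it runs an induction on $|T|$. One chooses $x^*\in T$ maximizing $|(x+K)\cap T|=t+1$, sets $T^*=T\setminus\{x^*\}$, and compares the \emph{decrements}: removing $x^*$ lowers $T_-(K,2s)$ by exactly $2s+2t$ (here symmetry of $K$ and $0\in K$ enter), while it lowers $T_+(F,s)$ by at most $s+2\lvert(-x^*+F)\cap T\rvert$. The covering fact you identified is then applied locally: the multiset $(-x^*+F)\cap T$ can be covered by at most $N(F,K)$ translates of $K$, and by the extremal choice of $x^*$ each such translate meets $T$ in at most $t+1$ elements, so $\lvert(-x^*+F)\cap T\rvert\le (t+1)N(F,K)$. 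This gives a decrement bound of $s+2(t+1)N(F,K)$ for $T_+$, which is at most $N(F,K)\cdot(2s+2t)$ precisely when $s\ge 2N(F,K)/(2N(F,K)-1)$, hence for $s\ge2$. The induction hypothesis on $T^*$ then finishes the step. This extremal-removal induction is precisely the device that replaces the global redistribution you were unable to find, and it sidesteps both of the failure modes you flagged (repeated values are handled because the removed element is the one with the \emph{largest} $K$-neighborhood, and the $O(m)$ boundary terms are absorbed by the $ms$ versus $2ms$ slack from $s\ge2$, exactly as you anticipated). Without this step, your argument has a genuine gap.
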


\begin{proof}
If $N(F, K)=\infty$, the above statement is obviously true. So we will assume that $N(F, K)$ is finite and prove the lemma by induction on the cardinality of $T$. When counting the cardinality of a multiset, every element counts even if two elements have the same value. For the base case $|T|=1$, we have $T_+(F, s)=s$ and $T_-(K, 2s)=2s$. Since $N(F, K)\geq1$, it is clear that the lemma has to be true. We assume that the lemma holds for any multiset $T$ with cardinality $|T|\leq m-1$. Let $t$ be some non-negative integer such that
$$
\max_{x\in T}|(x+K)\cap T|=t+1.
$$
Here we denote by $(x+K)\cap T$ the multiset consisting of elements of $T$ which lie in $x+K$. We will use similar notations without further clarification. 
Let $x^*\in T$ be an element that achieves the above maximum, and set $T^*=T\backslash\{x^*\}$, where ``$\setminus$" is the standard set subtraction notation. 
(We only throw $x^*$ away, not other elements with the same value). Since $K$ is a symmetric set containing $0$, we have
\begin{align}\label{eq:est-tks}
T_-(K, 2s)=T^*_-(K, 2s)+2s+2t.
\end{align}
We also have
\begin{align}\label{eq:est-tfs}
T_+(F, s)\leq T^*_+(F, s)+s+2|(-x^*+F)\cap T|.
\end{align}
The definition in \eqref{eq:def-nfk} implies that we can select at most $N(F, K)$ elements from $(-x^*+F)\cap T$, say $\{y_1, y_2,\cdots, y_k\}$ with $k\leq N(F, K)$, such that their mutual differences are not in $K$. Therefore we have
\begin{align}\label{eq:cover}
(-x^*+F)\cap T\subseteq\cup_{i}(y_i+K)\cap T.
\end{align}
Combining with \eqref{eq:est-tfs}, we have
\begin{align}\label{eq:est-tfs-1}
T_+(F, s)\leq T^*_+(F, s)+s+2(t+1)N(F, K).
\end{align}
By the induction assumption, the lemma holds for $T^*$. Combining \eqref{eq:est-tks} and \eqref{eq:est-tfs-1}, it is not hard to check that the lemma holds when 
$$
s\geq\frac{2N(F, K)}{2N(F, K)-1},
$$
which is implied by the assumption $s\geq 2$.
\end{proof}

By combining Proposition \ref{prop:prob-dis} with Lemma~\ref{lem:tfs-tks}, we have the following result.

\begin{thm}\label{thm:sum-diff}
Let $F$ and $K$ be measurable subsets of the abelian topological group $G$. Suppose that $K$ is symmetric and contains the identity of $G$ in its interior. 
For any i.i.d. random variables $X$ and $Y$ taking values in $G$, we have
$$
\mP(X+Y \in F)\leq N(F, K)\cdot\mP(X-Y\in K).
$$
\end{thm}


In the following, we study the comparison of $\mP(X-Y\in F)$ and  $\mP(X-Y\in K)$. The definition of $T_-(F, s)$ is given in \eqref{eq:def-tks} with $K$ replaced by $F$. Similar to Lemma \ref{lem:tfs-tks}, we have the following result.

\begin{lem}\label{lem:tfs-tks-1}
Suppose that $K$ is a symmetric set with $0\in K$. For $s\geq2$ and any multiset $T$, we have
$$
T_-(F, s)\leq (1+N(F\backslash K, K))\cdot T_-(K, 2s).
$$
\end{lem}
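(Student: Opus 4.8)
The plan is to mimic the proof of Lemma~\ref{lem:tfs-tks}, again proceeding by induction on $|T|$, but now being careful to separate out the contribution of differences that land in $K$ from those that land in $F \setminus K$. First I would dispose of the trivial case $N(F\setminus K, K)=\infty$, and check the base case $|T|=1$, where $T_-(F,s)=s \le (1+N(F\setminus K,K))\cdot 2s = T_-(K,2s)$ trivially. For the inductive step, I would pick $x^* \in T$ maximizing $|(x^*+K)\cap T|$, write this quantity as $t+1$, and set $T^* = T\setminus\{x^*\}$. Exactly as before, removing $x^*$ decreases $T_-(K,2s)$ by $2s+2t$ (using symmetry of $K$ and $0\in K$), giving the analogue of \eqref{eq:est-tks}.

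The new ingredient is bounding the decrease in $T_-(F,s)$ upon removing $x^*$. Deleting $x^*$ removes the constant $s$ and removes the $2|(x^*+F)\cap T|$ ordered pairs involving $x^*$ (using that $F$ need not be symmetric, but the count of ordered pairs $(i,j)$ with one coordinate equal to the index of $x^*$ and $x_i-x_j\in F$ is at most $|(x^*+F)\cap T| + |(-x^*+F)\cap T|$; here I would need to be slightly careful about whether to use $F$ or its symmetrization, but in any case the total is controlled by two such intersection counts). The key step is then to split $(x^*+F)\cap T$ (and its reflected counterpart) into the part lying in $x^*+K$ and the part lying in $x^*+(F\setminus K)$. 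The former has size at most $t+1$ by the maximality of $x^*$; the latter, by the definition \eqref{eq:def-nfk} of $N(F\setminus K, K)$, can be covered by at most $N(F\setminus K, K)$ translates $y_i + K$ of $K$ (choosing a maximal $K$-separated subset $\{y_1,\dots,y_k\}$, $k\le N(F\setminus K,K)$, of that part and using the same covering argument as in \eqref{eq:cover}), each of which meets $T$ in at most $t+1$ points. Hence the decrease in $T_-(F,s)$ is at most $s + 2(t+1)(1 + N(F\setminus K, K))$, the analogue of \eqref{eq:est-tfs-1}.

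Having assembled the one-step estimates, I would apply the induction hypothesis to $T^*$ and substitute: using $T_-(K,2s) = T^*_-(K,2s) + 2s + 2t$ and $T_-(F,s) \le T^*_-(F,s) + s + 2(t+1)(1+N(F\setminus K,K))$, together with $T^*_-(F,s) \le (1+N(F\setminus K,K))\cdot T^*_-(K,2s)$, it suffices to verify that
$$
s + 2(t+1)(1+N(F\setminus K,K)) \le (1+N(F\setminus K,K))(2s+2t),
$$
which, writing $M = 1 + N(F\setminus K, K) \ge 1$, rearranges to $s(2M-1) \ge 2M$, i.e. $s \ge 2M/(2M-1)$, and this is implied by $s\ge 2$ since $2M/(2M-1)\le 2$ for $M\ge 1$. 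This closes the induction.

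The main obstacle I anticipate is the bookkeeping in the one-step bound for $T_-(F,s)$: unlike in Lemma~\ref{lem:tfs-tks}, where $F$ was paired with sums and the asymmetry of $F$ only produced the harmless factor $2$ in \eqref{eq:est-tfs}, here $F$ sits in the difference set, so I must track both $(x^*+F)\cap T$ and $(-x^*+F)\cap T$ and correctly account for the ordered pairs $(i,i)$-type degeneracies as well as the overlap between the ``$K$-part'' and the ``$(F\setminus K)$-part'' of these intersections. The safe route is to bound each contribution crudely (the $K$-part by $t+1$, the $(F\setminus K)$-part by $(t+1)N(F\setminus K,K)$, independently for each of the two intersection sets), since the constant $1+N(F\setminus K,K)$ is exactly what such a crude split yields, and then rely on the slack between $2M/(2M-1)$ and $2$ to absorb any off-by-constant sloppiness. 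Once the combinatorial lemma is in hand, the corresponding probabilistic statement $\mP(X-Y\in F)\le (1+N(F\setminus K,K))\cdot\mP(X-Y\in K)$ follows from Proposition~\ref{prop:prob-dis} just as Theorem~\ref{thm:sum-diff} followed from Lemma~\ref{lem:tfs-tks}.
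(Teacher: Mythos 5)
Your proposal is correct and follows essentially the same route as the paper: induction on $|T|$ with the same choice of $x^*$, the same one-step decrease for $T_-(K,2s)$, and the same idea of splitting $(x^* \pm F)\cap T$ into a part in $x^*+K$ (bounded by $t+1$) and a part in $x^*\pm(F\setminus K)$ (covered by at most $N(F\setminus K,K)$ translates of $K$). The only differences are cosmetic: the paper bounds the $K$-part by $t$ rather than $t+1$ after removing $x^*$, giving the slightly tighter intermediate estimate $T_-(F,s)\leq T^*_-(F,s)+s+2t+2(t+1)N(F\setminus K,K)$, but as you correctly verify your cruder bound is still absorbed by $s\geq 2$; also note the reflected intersection should be $(x^*-F)\cap T$ rather than $(-x^*+F)\cap T$, though this does not change the count since $K$-separation is preserved under negation.
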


\begin{proof}
We only need to make a slight modification of the proof of Lemma \ref{lem:tfs-tks}. Let $x^*$ be selected in the same way as before and we set $T^*=T\backslash\{x^*\}$. It is clear that the equation \eqref{eq:est-tks} still holds. Instead of \eqref{eq:est-tfs}, we have
$$
T_-(F, s)\leq T^*_-(F, s)+s+|((x^*+F)\cap T)\backslash\{x^*\}|+|((x^*-F)\cap T)\backslash\{x^*\}|.
$$
Notice the following set-inclusion relations
$$
(x^*+F)\subseteq(x^*+K)\cup(x^*+F\backslash K)
$$
and
$$
(x^*-F)\subseteq(x^*+K)\cup(x^*-F\backslash K).
$$
We use the symmetry assumption of $K$ in the second inclusion relation. Apply the covering argument \eqref{eq:cover} again to $x^*+F\backslash K$ and $x^*-F\backslash K$. We have
\begin{align}\label{eq:est-tfs-3}
T_-(F, s)\leq T^*_-(F, s)+s+2t+2(t+1)N(F\backslash K, K).
\end{align}
Combining \eqref{eq:est-tks} and \eqref{eq:est-tfs-3}, for $s\geq 2$, we will have
$$
T_-(F, s)\leq (1+N(F\backslash K, K))\cdot T_-(K, 2s).
$$
Hence we complete the proof of the lemma.
\end{proof}

Combining Proposition \ref{prop:prob-dis} with Lemma~\ref{lem:tfs-tks-1}, we have the following result.

\begin{thm}\label{thm:diff-diff}
Let $F$ and $K$ be measurable subsets of the abelian topological group $G$. Suppose that $K$ is symmetric and contains the identity of $G$ in its interior. 
For any i.i.d. random variables $X$ and $Y$ taking values in $G$, we have
$$
\mP(X-Y \in F)\leq (1+(F\backslash K, K))\cdot\mP(X-Y\in K).
$$
\end{thm}

We include a result of Katona \cite{Kat69} as another application of this combinatorial argument, 
which is related to Tur\'an's theorem on triangle-free graphs.  
While we provide a reformulation of Katona's proof for completeness, we do not claim any novelty here. 

\begin{thm}[Katona \cite{Kat69}] \label{thm:katona}
Let $X$ and $Y$ be i.i.d. random variables taking values in a Hilbert space $V$ with the norm $\|\cdot\|$. Then we have 
$$
\left(\mP(\|X\|\geq 1)\right)^2\leq2\mP(\|X+Y\|\geq 1).
$$
\end{thm}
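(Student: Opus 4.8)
The plan is to follow the same combinatorial recipe used for Theorems~\ref{thm:sum-diff} and \ref{thm:diff-diff}: reduce the probabilistic statement to a deterministic counting inequality via Proposition~\ref{prop:prob-dis} (applied with $k=2$), and then prove the counting inequality by an extremal graph-theoretic argument. Concretely, given a multiset $x_1,\dots,x_m$ in the Hilbert space $V$, I would form the graph $G_m$ on vertex set $\{1,\dots,m\}$ with an edge between $i$ and $j$ whenever $\|x_i+x_j\|\ge 1$, and I would let $A=\{i: \|x_i\|\ge 1\}$ be the set of ``long'' vectors. The target is an inequality of the shape $|A|^2 \le o(m^2) + 2\,e(G_m)$, where $e(G_m)$ is the edge count; by Proposition~\ref{prop:prob-dis} this yields $(\P(\|X\|\ge 1))^2 \le 2\,\P(\|X+Y\|\ge 1)$ after normalizing (note that the diagonal terms $i=j$ and the pairs inside $A$ that happen not to be edges are what the $o(m^2)$ slack and the counting must absorb).

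First I would record the elementary geometric fact that drives everything: if $\|x\|\ge 1$, $\|y\|\ge 1$, $\|z\|\ge 1$ are three long vectors in a Hilbert space, then among the three pairwise sums $x+y$, $y+z$, $x+z$ at least one has norm $\ge 1$. Indeed, by the parallelogram law $\|x+y\|^2 + \|x-y\|^2 = 2\|x\|^2 + 2\|y\|^2 \ge 4$, and expanding $\|x+y\|^2 = \|x\|^2+\|y\|^2+2\langle x,y\rangle$, one sees that $\|x+y\|<1$ forces $\langle x,y\rangle < \tfrac12(1-\|x\|^2-\|y\|^2) \le -\tfrac12$; if all three pairwise sums were short, then all three inner products $\langle x,y\rangle,\langle y,z\rangle,\langle x,z\rangle$ would be $<-\tfrac12$, contradicting $\|x+y+z\|^2 = \|x\|^2+\|y\|^2+\|z\|^2 + 2(\langle x,y\rangle+\langle y,z\rangle+\langle x,z\rangle) \ge 0$ since the right side would be $< 3 \cdot (\text{something}) $... more carefully, $\langle x,y\rangle+\langle y,z\rangle+\langle x,z\rangle < -\tfrac32$ while $\|x\|^2+\|y\|^2+\|z\|^2 \ge 3$ is not directly enough, so I would instead use $\langle x,y\rangle < -\tfrac12\|x\|\|y\|$-type bounds; the cleanest route is: short sum $\Rightarrow$ the angle between the two unit-normalized vectors exceeds $120^\circ$, and three vectors cannot pairwise make angles $>120^\circ$. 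This means the complement graph $\overline{G_m}$ restricted to $A$ is triangle-free.

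Next I would invoke Tur\'an's theorem (or Mantel's theorem) for triangle-free graphs: a triangle-free graph on $|A|$ vertices has at most $\lfloor |A|^2/4\rfloor$ edges. Hence the number of non-edges of $G_m$ inside $A$ is at most $|A|^2/4$, so the number of edges of $G_m$ inside $A$ is at least $\binom{|A|}{2} - |A|^2/4 \ge |A|^2/4 - O(m)$. Therefore $e(G_m) \ge |A|^2/4 - O(m)$, i.e. $|A|^2 \le 4\,e(G_m) + O(m)$. Feeding this into Proposition~\ref{prop:prob-dis} with $F = \{(x,y): \|x+y\|\ge 1\}$ and the ``set'' $K$ playing the role of the long-vector constraint (one has to phrase both events as subsets of $V\times V$ of the right product-arity and check that the counting functions $T_m$ match $|A|^2$ and $2e(G_m)$ up to $o(m^2)$), one obtains $(\P(\|X\|\ge 1))^2 \le 2\,\P(\|X+Y\|\ge 1)$ — the factor drops from $4$ to $2$ because $e(G_m)$ counts unordered pairs whereas $T_m$ in the proposition counts ordered pairs, contributing the extra factor of $2$.

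The main obstacle, and the only genuinely non-routine point, is the geometric lemma that three long vectors cannot have all three pairwise sums short; everything else is bookkeeping plus citing Tur\'an/Mantel. I expect the parallelogram-law computation to go through cleanly once the statement is pinned down as ``no three unit vectors are pairwise at angle $>120^\circ$,'' which is classical. A secondary fussy point is matching the combinatorial quantities $|A|$ and $e(G_m)$ to the $T_m(\cdot,\cdot)$ notation of Proposition~\ref{prop:prob-dis} — in particular one should take $F$ as the event $\|X+Y\|\ge1$ (symmetric in its two arguments, so ordered versus unordered counting is transparent) and the role of ``$K$'' as the diagonal-type event $\|X\|\ge1$ depending on a single coordinate; I would handle the arity mismatch by either padding to $k=2$ with a dummy coordinate or by a direct Law-of-Large-Numbers argument mirroring the proof of the proposition, whichever is cleaner in context.
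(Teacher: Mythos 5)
Your proposal follows exactly the paper's proof: reduce via Proposition~\ref{prop:prob-dis} (with $k=2$) to a deterministic counting inequality, build a graph $G_m$ on the ``long'' vectors $A$ (those of norm $\ge 1$) with an edge when the pairwise sum has norm $\ge 1$, observe that no three long vectors can pairwise have sums of norm $<1$ (equivalently, no three unit vectors can pairwise make angles exceeding $2\pi/3$) so the complement graph on $A$ is triangle-free, and invoke Tur\'an/Mantel to get $e(G_m)\ge\binom{|A|}{2}-|A|^2/4$. The geometric lemma, the graph construction, and the ordered-versus-unordered factor-of-two bookkeeping all coincide with the paper's argument.

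One slip in your final paragraph: you have $F$ and $K$ swapped relative to what Proposition~\ref{prop:prob-dis} requires. Since the proposition yields $\mP(F)\le C(F,K)\cdot\mP(K)$, the quantity being bounded, $\left(\mP(\|X\|\ge1)\right)^2$, must be $\mP((X_1,X_2)\in F)$; so the correct choice is $F=\{(x,y):\|x\|\ge1,\ \|y\|\ge1\}\subseteq V\times V$ and $K=\{(x,y):\|x+y\|\ge1\}$. This also dissolves the ``arity mismatch'' you worried about: there is no need to pad with a dummy coordinate or rerun the law-of-large-numbers argument, because $F$ is a genuine product subset of $V\times V$, and one directly has $T_m(F)=|A|^2-|A|$ and $T_m(K)\ge 2e(G_m)$, giving $T_m(F)\le 2m+2T_m(K)$, which is Proposition~\ref{prop:prob-dis} with $C(F,K)=2$ and $h_2(m)=2m$.
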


\begin{proof}
Let $F, K\subseteq V\times V$ be the subsets defined by
$$
F=\{(x, y): \|x\|\geq 1, \|y\|\geq 1\}
$$ 
and
$$
K=\{(x, y): \|x+y\|\geq 1\}.
$$
Given a multiset $T=\{x_1,\cdots, x_m\}$ of $V$, we define
$$
T_m(F)=|\{(i, j): i\neq j, (x_i, x_j)\in F\}|
$$
and
$$
T_m(K)=|\{(i, j): i\neq j, (x_i, x_j)\in K\}|.
$$
By Proposition \ref{prop:prob-dis}, the statement holds if we can show that
\begin{align}\label{eq:tmf-tmk}
T_m(F)\leq2(m+T_m(K)).
\end{align}
Suppose that there are $n$ elements of $T$ with norms not less than 1. Then we have 
\begin{align}\label{eq:est-tmf}
T_m(F)=n^2-n.
\end{align}
Let us consider a simple graph $\calG$ on these $n$ elements-- 
note that we think of any two elements as different vertices even if they have the same value. 
Two vertices $x$ and $y$ are adjacent if and only if $\|x+y\|\geq1$. Then we have
\begin{align}\label{eq:est-tmk}
T_m(K)\geq 2e(\calG),
\end{align}
where $e(\calG)$ is the number of edges of $\calG$. For any 3 vertices $x, y, z$, there exists at least a pair, say $x, y$, 
such that the angle between them is no more than $2\pi/3$, which implies that $\|x+y\|\geq1$. 
This fact implies that the complementary graph is triangle free. Using Tur\'an's theorem, we have
\begin{align}\label{eq:est-edge}
e(\calG)\geq {n\choose 2}-\frac{n^2}{4}.
\end{align}
Then the estimate \eqref{eq:tmf-tmk} follows from \eqref{eq:est-tmf}, \eqref{eq:est-tmk} and \eqref{eq:est-edge}.
\end{proof}

\begin{rmk}
Katona \cite{Kat69} proved the result for discrete random variables, and then extended it for continuous random variables employing 
continuous versions of Tur\'an-type theorems. This discretization argument was used in a series of papers \cite{Kat69, Kat77, Kat78, Kat80, Kat81, Kat82, Kat83} 
on the optimal estimate of $\mP(\|X+Y\|\geq c)$ in terms of $\mP(\|X\|\geq 1)$; \cite{Kat85} contains a
comprehensive survey. Similar results were independently obtained by Sidorenko \cite{Sid83}. 
\end{rmk}

We do not know whether Katona's result holds in general Banach spaces. This motivates us to prove the following symmetrization result.

\begin{thm}\label{thm:katona-type}
Let $F$ and $K$ be measurable subsets of an abelian topological group $G$. Suppose that $K$ is symmetric and contains the identity of $G$ in its interior. For any $G$-valued i.i.d. random variables $X$ and $Y$, we have
$$
(\mP(X\in F))^2\leq N(F, K)\cdot\mP(X-Y\in K).
$$
\end{thm}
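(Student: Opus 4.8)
The plan is to follow the same combinatorial-to-probabilistic pipeline that powers Theorems~\ref{thm:sum-diff} and~\ref{thm:diff-diff}: use Proposition~\ref{prop:prob-dis}, so it suffices to establish a deterministic inequality of the form $T_m(F^{\times}) \leq o(m^2) + N(F,K)\cdot T_m(K^{\times})$ for suitable subsets $F^{\times}, K^{\times}$ of the product group $G\times G$. Here the left side should count pairs whose entries both land in $F$ (so that its expectation reproduces $(\P(X\in F))^2$ up to lower order), and the right side should count pairs whose difference lies in $K$. Concretely I would set $F^{\times}=\{(x,y): x\in F,\ y\in F\}$ and $K^{\times}=\{(x,y): x-y\in K\}$, so that for a multiset $T=\{x_1,\dots,x_m\}$,
\[
T_m(F^{\times}) = |\{(i,j): i\neq j,\ x_i\in F,\ x_j\in F\}|, \qquad T_m(K^{\times}) = |\{(i,j): i\neq j,\ x_i-x_j\in K\}|.
\]
If $n=|\{i: x_i\in F\}|$ then $T_m(F^{\times}) = n(n-1) = n^2-n$.

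The heart of the matter is then the purely combinatorial claim that $n^2 - n \leq o(m^2) + N(F,K)\cdot T_m(K^{\times})$, or even the cleaner $n^2 \leq N(F,K)\cdot(T_m(K^{\times}) + m)$. I would prove this by the covering argument already used in Lemma~\ref{lem:tfs-tks}: restrict attention to the $n$ indices $i$ with $x_i\in F$, and among \emph{these} select a maximal subfamily $\{x_{i_1},\dots,x_{i_k}\}\subseteq F$ whose pairwise differences avoid $K\setminus\{0\}$; by the definition~\eqref{eq:def-nfk} of $N(F,K)$ we have $k\leq N(F,K)$. Maximality forces every $x_i\in F$ to satisfy $x_i - x_{i_\ell}\in K$ for some $\ell$, i.e. $x_i\in x_{i_\ell}+K$, so the $n$ relevant points are covered by the $k$ translates $x_{i_\ell}+K$. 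Writing $n_\ell = |(x_{i_\ell}+K)\cap\{x_i:x_i\in F\}|$ we get $n \leq \sum_{\ell=1}^k n_\ell$, hence by Cauchy--Schwarz $n^2 \leq k\sum_{\ell} n_\ell^2 \leq N(F,K)\sum_\ell n_\ell^2$. Finally each cluster $(x_{i_\ell}+K)\cap\{x_i:x_i\in F\}$ contributes, using symmetry of $K$, a complete set of within-cluster ordered pairs $(i,j)$ with $x_i-x_j\in(x_{i_\ell}+K)-(x_{i_\ell}+K)$; but that last set need not be inside $K$, so one must be a bit careful. The right fix is to center at one point of each cluster: if $x_i, x_j$ both lie in $x_{i_\ell}+K$ this does not put $x_i-x_j$ in $K$. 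So instead I would bound $n_\ell^2 \leq n_\ell + 2\binom{n_\ell}{2}$ and observe that the pairs $(x_i, x_{i_\ell})$ themselves, for $x_i$ in cluster $\ell$, contribute to $T_m(K^{\times})$ — this recovers a clean count but only gives $\sum_\ell n_\ell$, not $\sum_\ell n_\ell^2$.

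Because of this tension, I expect the main obstacle to be matching the quadratic growth $n^2$ of the left-hand side against a quantity $T_m(K^{\times})$ that the covering argument only naturally lower-bounds linearly in $\sum_\ell n_\ell$. The resolution, and the step I would think hardest about, is to exploit that $K$ is symmetric and contains $0$ so that \emph{within} each translate $x_{i_\ell}+K$ one can iterate: a cluster of size $n_\ell$ that is itself a subset of $F$ has its own packing number at most $N(F,K)$, which should let one run an induction on $m$ (as in Lemma~\ref{lem:tfs-tks}) rather than a one-shot Cauchy--Schwarz. Alternatively — and this is probably the slicker route — one notes $n^2 \leq N(F,K)\sum_\ell n_\ell^2$ from Cauchy--Schwarz as above, and then for each $\ell$ bounds $n_\ell^2 - n_\ell = |\{(i,j):i\neq j,\ x_i,x_j\in(x_{i_\ell}+K)\cap F\}|$; picking \emph{this} cluster's own maximal $K$-avoiding subfamily and recursing gives a clean induction whose base case is a singleton cluster. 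Once the deterministic inequality $T_m(F^{\times}) \leq \phi_2(m) + N(F,K)\cdot T_m(K^{\times})$ with $\phi_2(m)=o(m^2)$ is in hand, Proposition~\ref{prop:prob-dis} (applied with $k=2$) converts it directly into $(\P(X\in F))^2 = \P((X_1,X_2)\in F^{\times}) \leq N(F,K)\cdot\P((X_1,X_2)\in K^{\times}) = N(F,K)\cdot\P(X-Y\in K)$, which is the assertion. The hypothesis that $K$ contains the identity in its interior is exactly what guarantees $K^{\times}$ is measurable with the interior properties needed to invoke the proposition, just as in Theorems~\ref{thm:sum-diff} and~\ref{thm:diff-diff}.
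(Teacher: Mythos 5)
Your reduction via Proposition~\ref{prop:prob-dis} and the setup (with $T(F)=n^2-n$ for $n=|\{i:x_i\in F\}|$ and $T_-(K)$ counting ordered $K$-pairs) are correct, but the combinatorial core is left with a genuine gap. After Cauchy--Schwarz you need $\sum_\ell n_\ell^2 \lesssim T_-(K)$, yet, as you yourself notice, two points of a single cluster $x_{i_\ell}+K$ have a difference in $K-K$, not $K$, so within-cluster pairs simply are not counted by $T_-(K)$; the ``center at one point of each cluster'' fix only produces $\sum_\ell n_\ell$, which is linear where you need quadratic. The proposed recursion inside each cluster also does not close: a single $K$-pair can lie in up to $N(F,K)$ clusters, so summing a cluster-level bound of the form $n_\ell^2-n_\ell\leq N(F,K)\cdot(\text{within-cluster }K\text{-pairs})+\cdots$ over $\ell$ and then combining with Cauchy--Schwarz would inflate the constant to something like $N(F,K)^3$ rather than $N(F,K)$. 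In short, nothing you have written produces a lower bound on $T_-(K)$ that grows quadratically in $n$ with the right constant.

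That quadratic lower bound is exactly the missing ingredient, and there are two clean ways to get it. The paper's route is an induction on $m$ with an additive cushion: redefine $T_-(K)$ to include the constant $N(F,K)^2+2m$, assume WLOG that all $x_i\in F$ so $T(F)=m^2-m$, pick $x^*$ maximizing $|(x^*+K)\cap T|=t+1$, and invoke pigeonhole (the $m$ points are covered by at most $N(F,K)$ translates of $K$) to get $t+1\geq m/N(F,K)$. Deleting $x^*$ drops $T(F)$ by exactly $2(m-1)$ and drops the cushioned $T_-(K)$ by $2t+2\geq 2m/N(F,K)$; multiplying the latter by $N(F,K)$ dominates the former and the induction closes. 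Alternatively, closer in spirit to your instinct: form the graph on the $n$ indices with $x_i\in F$ whose edges are $i\sim j$ iff $x_i-x_j\in K$. Its independence number is at most $N(F,K)$, so the complementary form of Tur\'an's theorem yields at least $\frac{n^2}{2N(F,K)}-\frac{n}{2}$ edges, hence $T_-(K)\geq \frac{n^2}{N(F,K)}-n$, i.e.\ $n^2-n\leq N(F,K)\,T_-(K)+N(F,K)\,n$, which has exactly the $o(m^2)$ error Proposition~\ref{prop:prob-dis} tolerates. Either the pigeonhole/induction or the Tur\'an bound must replace the Cauchy--Schwarz step; as written, your argument does not reach the stated inequality.
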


\begin{proof}
The proof is similar to that of Theorem \ref{thm:sum-diff}. We can assume that $N(F, K)$ is finite. For a multiset $T=\{x_1, \cdots, x_m\}$ of $G$, we define 
$$
T(F)=|\{(i, j): i\neq j, x_i\in F, x_j\in F\}|
$$
and
$$
T_-(K)=(N(F, K))^2+2m+|\{(i, j): i\neq j, x_i-x_j\in K\}|.
$$
Then we need to prove the following combinatorial counterpart of the statement
\begin{align}\label{eq:tf-tk}
T(F)\leq N(F, K)\cdot T_-(K).
\end{align}
We will prove it by induction on $m$. It is clear that the statement holds for $m\leq N(F, K)$. Without of loss of generality, we can assume that $x_i\in F$ for all $x_i\in T$. Let $t, x^*, T^*$ be defined in the same way as in Lemma \ref{lem:tfs-tks}. By the definition of $N(F, K)$ in \eqref{eq:def-nfk}, we can select at most $N(F, K)$ elements of $T$ with mutual differences not contained by $K$. The pigeon-hole principle implies that
$$
t+1\geq \frac{m}{N(F, K)}.
$$
One can check that
\begin{align}\label{eq:est-tf}
T(F)=m^2-m=T^*(F)+2(m-1)
\end{align}
and
\begin{align}\label{eq:est-tk}
T_-(K)=T^*_-(K)+2t+2\geq T^*_-(K)+\frac{2m}{N(F, K)}.
\end{align}
The estimate \eqref{eq:tf-tk} follows from \eqref{eq:est-tf}, \eqref{eq:est-tk} and the induction assumption on $T^*$.
\end{proof}

\begin{coro}\label{cor:sym-kat}
Let $a, b>0$. For any i.i.d. real-valued random variables $X$ and $Y$, we have
$$
(\mP(|X|\leq b))^2\leq\lceil 2b/a\rceil\cdot\mP(|X-Y|\leq a).
$$
Moreover the estimate can not be improved.
\end{coro}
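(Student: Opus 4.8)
The plan is to deduce Corollary~\ref{cor:sym-kat} from Theorem~\ref{thm:katona-type} by specializing to $G = \R$ and making an appropriate choice of the sets $F$ and $K$. First I would set $F = [-b, b]$ and $K = (-a, a)$; then $K$ is symmetric and contains $0$ in its interior, so Theorem~\ref{thm:katona-type} applies and yields $(\P(X \in [-b,b]))^2 \le N([-b,b], (-a,a)) \cdot \P(X - Y \in (-a,a))$. Since $\P(X-Y \in (-a,a)) \le \P(|X-Y| \le a)$ and $\P(X\in[-b,b]) = \P(|X|\le b)$, the only thing that remains is to verify the combinatorial bound $N([-b,b], (-a,a)) \le \lceil 2b/a\rceil$. (One should be slightly careful about open versus closed intervals for $K$; using the open interval $(-a,a)$ makes the packing count cleanest, and the resulting probability bound is only weakened by passing to the closed event $|X-Y|\le a$. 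Alternatively one takes $K=[-a,a]$ and argues directly with the closed interval — I would mention this but not belabor it.)

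For the bound on $N([-b,b],(-a,a))$: by definition this is the largest $n$ for which one can choose $x_1 < x_2 < \cdots < x_n$ in $[-b,b]$ with all pairwise differences avoiding $(-a,a)$, i.e. $|x_i - x_j| \ge a$ for $i \ne j$. Consecutive points are then at distance at least $a$, so $x_n - x_1 \ge (n-1)a$; since $x_n - x_1 \le 2b$, we get $n - 1 \le 2b/a$, hence $n \le \lfloor 2b/a\rfloor + 1 = \lceil 2b/a \rceil$ when $2b/a \notin \Z$, and $n \le 2b/a + 1$ otherwise — in all cases $n \le \lceil 2b/a\rceil$ (taking $K$ open ensures the endpoints $\pm b$ can both be used when $2b/a$ is an integer). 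This establishes the stated inequality.

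For the sharpness claim, I would exhibit a distribution making the inequality tight, or asymptotically tight. The natural candidate is $X$ uniform on the set of $\lceil 2b/a \rceil$ equally spaced points realizing the packing — say $x_k = -b + (k-1)a$ for $k = 1, \dots, n$ with $n = \lceil 2b/a\rceil$ (adjusting the spacing slightly so the points fit in $[-b,b]$ when $2b/a$ is not an integer, or taking $b$ to be an integer multiple of $a/2$ for the clean case). Then $\P(|X|\le b) = 1$, so the left side is $1$, while $X - Y$ is supported on differences $x_i - x_j$, and $\P(|X-Y|\le a) = \P(X = Y) + \P(|X-Y| = a) = \frac{1}{n} + \frac{2(n-1)}{n^2} $; this is not quite $1/n$, so the uniform distribution on all $n$ points overshoots. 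Instead the correct extremal configuration should concentrate mass so that only $X = Y$ contributes: I would take $X$ uniform on a \emph{sparse} arithmetic progression of $n$ points with spacing strictly greater than $a$ — then $|X - Y| \le a$ forces $X = Y$, giving $\P(|X-Y|\le a) = 1/n$ and hence equality $1 = n \cdot \frac1n$. The only subtlety is fitting $n = \lceil 2b/a\rceil$ points with spacing $> a$ inside $[-b,b]$: this is possible precisely in the limiting regime, so strictly one gets a sequence of distributions approaching equality, or exact equality when $2b/a$ is not an integer. The main obstacle, and the part requiring the most care, is this endpoint/integrality bookkeeping — matching the ceiling function exactly in both the upper bound and the extremal example, and deciding whether to state the result with open or closed $K$ so that ``cannot be improved'' is literally correct rather than merely asymptotically correct.
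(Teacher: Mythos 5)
Your overall strategy — specialize Theorem~\ref{thm:katona-type} to $G=\R$ with intervals $F$, $K$, then compute the entropy number and exhibit a near-extremal discrete distribution — is exactly the paper's route. However, your decision to take $K$ \emph{open}, $K=(-a,a)$, is a bug, not a safety measure. With $K=(-a,a)$, the constraint in the definition of $N(F,K)$ is $|x_i-x_j|\ge a$, and you correctly observe that this gives $n\le 2b/a+1$ when $2b/a\in\Z$; but $\lceil 2b/a\rceil=2b/a$ in that case, so your immediately following claim ``in all cases $n\le\lceil 2b/a\rceil$'' is false. Concretely, for $a=1$, $b=1$ the set $\{-1,0,1\}\subseteq[-1,1]$ has all nonzero differences outside $(-1,1)$, so $N([-1,1],(-1,1))=3>\lceil 2\rceil=2$. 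The parenthetical ``taking $K$ open ensures the endpoints $\pm b$ can both be used'' is exactly the phenomenon that \emph{breaks} the bound, not one that helps it: allowing both endpoints inflates $N$ and makes the inequality you would deduce strictly weaker than the stated corollary when $2b/a\in\Z$.

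The correct choice is the paper's: take $K=[-a,a]$ closed. Then $0$ is still in the interior (so Theorem~\ref{thm:katona-type} applies), the constraint becomes the strict inequality $|x_i-x_j|>a$, and a short computation shows $N([-b,b],[-a,a])=\lceil 2b/a\rceil$ exactly, in both the integer and non-integer cases: strictness of the gap forces $n-1<2b/a$, hence $n\le\lceil 2b/a\rceil$, and one fits $\lceil 2b/a\rceil$ points with spacing $a+\epsilon$ for small $\epsilon>0$. Moreover $\P(X-Y\in K)=\P(|X-Y|\le a)$ is then the desired event on the nose, with no need for the slackening step $\P(|X-Y|<a)\le\P(|X-Y|\le a)$. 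Your sharpness discussion eventually lands on the right idea (spacing strictly greater than $a$, so that $|X-Y|\le a$ forces $X=Y$), which matches the paper's explicit example of $X$ uniform on $\{-b,\,-b+(1+\epsilon)a,\ldots,\,-b+(\lceil 2b/a\rceil-1)(1+\epsilon)a\}$, but the preliminary false start with spacing exactly $a$ should be cut.
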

The result follows from Theorem \ref{thm:katona-type} with $F=[-b, b]$ and $K=[-a, a]$. The estimate is sharp for the uniform distribution over $\{-b, -b+(1+\epsilon)a, \cdots, -b+(\lceil 2b/a\rceil-1)(1+\epsilon)a\}$ for some small $\epsilon>0$.

To conclude this section, we ask the question of whether there is an analog of Proposition~\ref{prop:prob-dis} when the 
probabilities in comparison are of different magnitudes. 
For example, we do not know a combinatorial counterpart of the distribution-free inequality
$$
\mP((X_1,\cdots, X_k)\in F)\leq f(\mP(X_1,\cdots, X_l)\in K),
$$
where $f$ is a polynomial.


\section{Small ball inequalities in non-abelian groups}
\label{sec:nab}

In this section, let $G$ denote an arbitrary topological group with the identity $e$. We will show that Theorem \ref{thm:sum-diff} 
and Theorem \ref{thm:diff-diff} still hold for certain measurable sets $F$ and $K$ in this general setting. 
Similar to the sumset $F+K$ in the abelian case, define the product set $F\cdot K$ in this non-abelian setting as
$$
F\cdot K=\{xy: x\in F, y\in K\}.
$$
The entropy number $N(F, K)$ is redefined as
\begin{align}\label{eq:def-nfk-1}
N(F, K)=\sup\{|S|: S\subseteq F,  (S\cdot S^{-1})\cap K\subseteq\{e\}\},
\end{align}
where $S^{-1}$ is the set of all inverses of the elements of $S$. For $s\in\mR$ and a multiset $T=\{x_1,\cdots, x_m\}$, the quantities $T_+(F, s)$ and $T_-(K, s)$ are redefined as
\begin{align}\label{def-tfs-1}
T_+(F, s)=ms+|\{(i, j): i\neq j,  x_ix_j\in F\}|
\end{align}
and
\begin{align}\label{def-tks-1}
T_-(K, s)=ms+|\{(i, j): i\neq j,  x_ix_j^{-1}\in K\}|.
\end{align}
Similar to Lemma \ref{lem:tfs-tks}, we have the following result.

\begin{lem}\label{lem:tfs-tks-2}
Suppose $K$ is a normal subgroup of $G$. For $s\geq 2$ and any multi-set $T$, we have
$$
T_+(F, s)\leq N(F, K)\cdot T_-(K, 2s).
$$
\end{lem}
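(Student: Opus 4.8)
The plan is to mimic the proof of Lemma~\ref{lem:tfs-tks} almost verbatim, replacing the abelian structure by the group-theoretic facts available when $K$ is a normal subgroup. First I would dispose of the case $N(F,K)=\infty$ trivially, so assume $N(F,K)<\infty$, and induct on $m=|T|$, with the base case $|T|=1$ handled exactly as before since $T_+(F,s)=s\le s\cdot N(F,K)\le 2s\cdot N(F,K)=T_-(K,2s)$ using $N(F,K)\ge 1$. For the inductive step, pick $t$ and $x^*\in T$ with $|(x^*K)\cap T|=t+1$ maximal — here one must be careful whether to use left cosets $x^*K$ or right cosets $Kx^*$, but since $K$ is normal these coincide, which is precisely the point where normality enters. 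Set $T^*=T\setminus\{x^*\}$.

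The key identities to re-establish are the analogues of \eqref{eq:est-tks} and \eqref{eq:est-tfs}. For the difference count: $x_ix_j^{-1}\in K$ with one of $i,j$ equal to the index of $x^*$ means $x^*x_j^{-1}\in K$ or $x_ix^{*-1}\in K$; since $K$ is a subgroup these two conditions are equivalent (one is the inverse of the other), and $x^*x_j^{-1}\in K \iff x_j\in Kx^* = x^*K$ (again using normality), so there are exactly $t$ such $x_j$ in $T^*$, giving $2t$ ordered pairs, plus the $2s$ from the $ms$ term's decrement; hence $T_-(K,2s)=T^*_-(K,2s)+2s+2t$. For the sum count: the pairs involving $x^*$ contribute $x^*x_j\in F$ or $x_ix^*\in F$, i.e. $x_j\in x^{*-1}F$ or $x_i\in Fx^{*-1}$. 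This is where a genuine subtlety appears — in the nonabelian case the two sets $x^{*-1}F$ and $Fx^{*-1}$ are generally different, so I would bound $T_+(F,s)\le T^*_+(F,s)+s+|(x^{*-1}F)\cap T|+|(Fx^{*-1})\cap T|$. The hope is that the covering argument \eqref{eq:cover} applies to each: by the definition \eqref{eq:def-nfk-1}, from $(x^{*-1}F)\cap T$ one can select at most $N(x^{*-1}F,K)$ elements with pairwise quotients outside $K$, and each such selection covers $(x^{*-1}F)\cap T$ by at most $N(\cdot,K)$ left cosets each meeting $T$ in $\le t+1$ points.

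The main obstacle — and the step I would scrutinize most carefully — is controlling these translated entropy numbers $N(x^{*-1}F,K)$ and $N(Fx^{*-1},K)$ by $N(F,K)$ itself, and reconciling the two-sided contribution with the single factor $N(F,K)$ in the statement. When $K$ is normal, right multiplication by a fixed element is a bijection that sends $K$-cosets to $K$-cosets and preserves the relation ``$st^{-1}\in K$'' (since $x a (x b)^{-1} = x(ab^{-1})x^{-1}\in K$ iff $ab^{-1}\in K$, using normality for the conjugation), so $N(gF,K)=N(F,K)=N(Fg,K)$ for every $g$; this rescues the covering bounds. As for the factor: I expect the correct reading is that the pairs $x^*x_j\in F$ and $x_ix^*\in F$ are in bijection via inversion composed with the symmetry of $F$ — but $F$ need not be symmetric. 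The likely resolution, matching the statement's single $N(F,K)$, is that in \eqref{eq:est-tfs} of the original proof the factor $2|(-x^*+F)\cap T|$ already accounts for both ordered directions at once because $x^*+x_j = x_j+x^*$ in the abelian case; in the nonabelian normal case one instead observes $x^* x_j \in F \iff x_j \in x^{*-1}F$ and $x_i x^* \in F \iff x_i \in Fx^{*-1} = x^{*-1}F$ (the last equality again by normality of... no — by normality $Fx^{*-1} = x^{*-1}(x^*Fx^{*-1})$, which need not equal $x^{*-1}F$ unless $F$ is conjugation-invariant). So I would either impose that $F$ is a union of conjugacy classes, or more likely re-derive the bound as $T_+(F,s)\le T^*_+(F,s)+s+2(t+1)N(F,K)$ by showing the total number of ordered pairs through $x^*$ lying in $F$ is at most $2(t+1)N(F,K)$ directly from a single covering of the relevant subset of $T$; combined with $T_-(K,2s)=T^*_-(K,2s)+2s+2t$ and the induction hypothesis, the inequality closes exactly when $s\ge \frac{2N(F,K)}{2N(F,K)-1}$, which $s\ge 2$ implies. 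Verifying that this direct covering goes through — in particular that one does not lose a factor of $2$ in $N$ — is the crux, and I would check it by tracking the argument of \cite{AY95} (Lemma 3.2) that the authors cite as the template.
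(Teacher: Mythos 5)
Your proposal is correct and follows the paper's proof closely. The worry you flag at the end is already resolved by the invariance you yourself establish — $N(gF,K)=N(F,K)=N(Fg,K)$ for normal $K$ (left translation uses conjugation-invariance of $K$, right translation is free) — which bounds $|((x^*)^{-1}F)\cap T|$ and $|(F(x^*)^{-1})\cap T|$ each by $(t+1)N(F,K)$ separately, giving $T_+(F,s)\le T^*_+(F,s)+s+2(t+1)N(F,K)$ without any factor of two being lost and without ever needing $F(x^*)^{-1}=(x^*)^{-1}F$; this is exactly what the paper does.
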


\begin{proof}
The lemma can be proved with a slight modification of the proof of Lemma \ref{lem:tfs-tks}. In order to see how the assumption of $K$ is used, we write the proof again. Let $t$ be some non-negative integer such that 
$$
\max_{x\in T}|(xK)\cap T|=t+1,
$$
where $xK$ is the set of the products of $x$ and the elements of $K$. Let $x^*$ be an element such that the maximum can achieved and $T^*=T\backslash\{x^*\}$. By the definition of $T_-(K, 2s)$, we have
$$
T_-(K, 2s)=T^*_-(K, 2s)+2s+|((Kx^*)\cap T)\backslash\{x^*\}|+|((x^*K^{-1})\cap T)\backslash\{x^*\}|
$$
Since $K$ is a normal subgroup, the estimate of $T_-(K, 2s)$ in \eqref{eq:est-tks} still holds. Similar to \eqref{eq:est-tfs}, we have
$$
T_+(F, s)\leq T^*_+(F, s)+s+|((x^*)^{-1}F)\cap T|+|(F(x^*)^{-1})\cap T|.
$$
Let $\alpha_1, \alpha_2\in F$ be any two elements, and $u_1=(x^*)^{-1}\alpha_1$, $u_2=(x^*)^{-1}\alpha_2$. Since $K$ is a normal subgroup, we can see that $u_1u_2^{-1}\in K$ if only if $\alpha_1\alpha_2^{-1}\in K$. (The assumption that $K$ is a normal subgroup is important here). By the definition of $N(F, K)$ in \eqref{eq:def-nfk-1}, we can select at most $N(F, K)$ elements from $((x^*)^{-1}F))\cap T$, say $\{y_1,\cdots, y_k\}$, such that $y_iy_j^{-1}\notin K$ for any $y_i\neq y_j$. Then we have the following covering relation
$$
((x^*)^{-1}F)\cap T\subseteq\cup_{i}(y_iK)\cap T,
$$
which implies that
$$
|((x^*)^{-1}F)\cap T|\leq(t+1)N(F, K).
$$
Similarly we have the same estimate for $|(F(x^*)^{-1})\cap T|$. Then the estimate of $T_+(F, s)$ is exactly the same as \eqref{eq:est-tfs-1}. So we proved the lemma.
\end{proof}

Using Proposition \ref{prop:prob-dis} together with Lemma~\ref{lem:tfs-tks-2}, we have the following result.

\begin{thm}\label{thm:prod-rat}
Suppose that $K$ is a normal subgroup of the topological group $G$ and the interior of $K$ contains the identity. 
For any i.i.d. random variables $X$ and $Y$ taking values in $G$, and any measurable subset $F$ of $G$, we have
$$
\mP(XY \in F)\leq N(F, K)\cdot\mP(XY^{-1}\in K).
$$
\end{thm}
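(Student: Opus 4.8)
The plan is to mirror exactly the abelian proof of Theorem~\ref{thm:sum-diff}, replacing Lemma~\ref{lem:tfs-tks} by its non-abelian counterpart Lemma~\ref{lem:tfs-tks-2}, which we are allowed to assume. Concretely, set $k=2$ and work in the product space $G\times G$. Define $F'=\{(x,y)\in G\times G:\,xy\in F\}$ and $K'=\{(x,y)\in G\times G:\,xy^{-1}\in K\}$; both are measurable since the group multiplication and inversion maps are continuous (hence Borel measurable) and $F,K$ are measurable. Note $K'$ contains a neighborhood of $(e,e)$ because $K$ contains a neighborhood of $e$ and $(x,y)\mapsto xy^{-1}$ is continuous. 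The point of Lemma~\ref{lem:tfs-tks-2} is precisely that, for every deterministic multi-set $T=\{x_1,\dots,x_m\}$ and every $s\ge 2$,
\begin{align*}
ms+|\{(i,j):i\neq j,\ x_ix_j\in F\}|\ \leq\ N(F,K)\cdot\bigl(2ms+|\{(i,j):i\neq j,\ x_ix_j^{-1}\in K\}|\bigr).
\end{align*}
First I would record the trivial case: if $N(F,K)=\infty$ the theorem is vacuous, so assume it is finite.

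Next I would extract from this the hypothesis of Proposition~\ref{prop:prob-dis}. Taking $s=2$ in Lemma~\ref{lem:tfs-tks-2} and expanding, we get $|\{(i,j):i\neq j,\ x_ix_j\in F\}| \leq (4N(F,K)-2)m + N(F,K)\cdot|\{(i,j):i\neq j,\ x_ix_j^{-1}\in K\}|$, i.e. in the language of Section~\ref{sec:comb}, with $k=2$,
\begin{align*}
T_m(F')\ \leq\ h_2(m) + C(F',K')\cdot T_m(K'),
\end{align*}
where $h_2(m)=(4N(F,K)-2)m = o(m^2)$ and $C(F',K')=N(F,K)$. This is exactly the inequality \eqref{eq:assumption} required by Proposition~\ref{prop:prob-dis}, valid for all deterministic sequences. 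Here I should double-check that $T_m(F')$ in the Section~\ref{sec:comb} sense, $|\{(i,j):i\neq j,\ (x_i,x_j)\in F'\}|$, coincides with the quantity $|\{(i,j):i\neq j,\ x_ix_j\in F\}|$ appearing in $T_+(F,s)$; by the definition of $F'$ it does, and similarly $T_m(K')$ matches the multiplicative count in $T_-(K,s)$.

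Then I would simply invoke Proposition~\ref{prop:prob-dis} with $k=2$ to conclude
\begin{align*}
\mP\bigl((X_1,X_2)\in F'\bigr)\ \leq\ N(F,K)\cdot\mP\bigl((X_1,X_2)\in K'\bigr),
\end{align*}
and translate back: $(X,Y)\in F'$ means $XY\in F$ and $(X,Y)\in K'$ means $XY^{-1}\in K$, so this reads $\mP(XY\in F)\le N(F,K)\cdot\mP(XY^{-1}\in K)$, as desired. The only genuine content beyond bookkeeping is Lemma~\ref{lem:tfs-tks-2} itself, and since the problem permits assuming earlier results, the main thing I must be careful about is the measurability of $F'$ and $K'$ (so that Proposition~\ref{prop:prob-dis} applies — the random vector $(X_1,X_2)$ must have a well-defined probability of landing in these sets) and the verification that $K'$ indeed contains a neighborhood of the identity of $G\times G$, which is exactly what lets the hypothesis "$K$ contains $e$ in its interior" do its job. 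Neither is hard: the first is continuity of group operations, the second is continuity of $(x,y)\mapsto xy^{-1}$ at $(e,e)$ together with $e\in\interior K$.
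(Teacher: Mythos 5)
Your proposal is correct and follows exactly the paper's route: the paper proves Theorem~\ref{thm:prod-rat} precisely by combining Proposition~\ref{prop:prob-dis} (with $k=2$, applied to the product-space sets $F'$ and $K'$) with the combinatorial Lemma~\ref{lem:tfs-tks-2}, which is the only place where normality of $K$ enters. The one small over-statement is your remark that the interior hypothesis on $K$ ``does its job'' via $K'$ containing a neighborhood of $(e,e)$: neither Proposition~\ref{prop:prob-dis} nor Lemma~\ref{lem:tfs-tks-2} actually invokes this condition, so it plays no role in the proof as written (it is presumably retained in the theorem statement for consistency with the abelian case and to rule out degenerate situations).
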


We omit the proof of the following lemma, since it is essentially the same as that of Lemma \ref{lem:tfs-tks-1}.

\begin{lem}
Suppose that $K\subseteq G$ is a normal subgroup. For $s\geq2$ and any multiset $T$, we have
$$
T_-(F, s)\leq (1+N(F\backslash K, K))\cdot T_-(K, 2s).
$$
\end{lem}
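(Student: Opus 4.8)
The plan is to run exactly the induction on $m=|T|$ that underlies the proof of Lemma~\ref{lem:tfs-tks-1}, but with the coset bookkeeping of Lemma~\ref{lem:tfs-tks-2} replacing commutativity. If $N(F\setminus K,K)=\infty$ there is nothing to prove, since the right-hand side is then infinite while $T_-(K,2s)=2ms+\cdots$ is finite and strictly positive; so assume $N(F\setminus K,K)<\infty$. For $|T|=1$ one has $T_-(F,s)=s\le 2s\le (1+N(F\setminus K,K))\,T_-(K,2s)$, the base case. For the inductive step, let $t\ge 0$ be defined by $\max_{x\in T}|(xK)\cap T|=t+1$, let $x^*$ attain the maximum, and set $T^*=T\setminus\{x^*\}$.

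Because $K$ is a normal subgroup, $x^*x_i^{-1}\in K\iff x_i(x^*)^{-1}\in K\iff x_i\in x^*K$, so, exactly as in the derivation of \eqref{eq:est-tks} in Lemma~\ref{lem:tfs-tks-2}, one gets $T_-(K,2s)=T^*_-(K,2s)+2s+2t$. On the $F$-side, deleting the chosen copy of $x^*$ destroys only ordered pairs $(x_i,x^*)$ or $(x^*,x_i)$, and since $x_i(x^*)^{-1}\in F\iff x_i\in Fx^*$ and $x^*x_i^{-1}\in F\iff x_i\in F^{-1}x^*$, one obtains
$$T_-(F,s)\le T^*_-(F,s)+s+\big|(Fx^*\cap T)\setminus\{x^*\}\big|+\big|(F^{-1}x^*\cap T)\setminus\{x^*\}\big|.$$
Using $F=K\sqcup(F\setminus K)$, $F^{-1}=K\sqcup(F\setminus K)^{-1}$ (here $K^{-1}=K$) and $Kx^*=x^*K$, each of the last two sets lies in $(x^*K)\cup\big((F\setminus K)^{\pm1}x^*\big)$; the coset $x^*K$ contributes at most $t$ beyond $x^*$ by the choice of $t$.

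The crux is bounding $(F\setminus K)x^*\cap T$ and $(F\setminus K)^{-1}x^*\cap T$ by $(t+1)N(F\setminus K,K)$ through the covering argument \eqref{eq:cover}. For the first, apply \eqref{eq:def-nfk-1} to the finite set $\{x_i(x^*)^{-1}:x_i\in(F\setminus K)x^*\cap T\}\subseteq F\setminus K$: a maximal $K$-separated subset $\{\alpha_1,\dots,\alpha_k\}$ of it has $k\le N(F\setminus K,K)$, and by maximality together with normality every member of the set lies in some $\alpha_pK$. Translating on the right by $x^*$ and using normality again, $(F\setminus K)x^*\cap T\subseteq\bigcup_p(\alpha_p x^*K\cap T)$, and the point is that $\alpha_p x^*=x_{i_p}\in T$, so each $\alpha_p x^*K$ meets $T$ in at most $t+1$ points. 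The set $(F\setminus K)^{-1}x^*\cap T$ is handled identically after writing $x_i\in(F\setminus K)^{-1}x^*\iff x^*x_i^{-1}\in F\setminus K$. This yields precisely \eqref{eq:est-tfs-3}, namely $T_-(F,s)\le T^*_-(F,s)+s+2t+2(t+1)N(F\setminus K,K)$. Substituting the induction hypothesis $T^*_-(F,s)\le(1+N(F\setminus K,K))T^*_-(K,2s)$ and $T^*_-(K,2s)=T_-(K,2s)-2s-2t$, the claimed inequality reduces after cancellation to $s\big(1+2N(F\setminus K,K)\big)\ge 2N(F\setminus K,K)$, which holds since $s\ge2$. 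Feeding this combinatorial bound into Proposition~\ref{prop:prob-dis} gives the probabilistic statement, as in Theorem~\ref{thm:diff-diff}.

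The main obstacle is the covering step: in the abelian case symmetry of $K$ sufficed, whereas here one must carefully distinguish left from right cosets and verify that it is precisely normality of $K$ that lets one push the covering translates $\alpha_pK$ across $x^*$ while keeping a representative inside $T$ (which is what licenses the bound $t+1$). Keeping the inversions straight in the term $(F\setminus K)^{-1}x^*$, the non-abelian shadow of $x^*-(F\setminus K)$ from the abelian proof, is where an error is most likely to creep in.
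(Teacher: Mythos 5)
Your proof is correct and follows exactly the route the paper indicates: the paper omits this proof, stating it can be proved in the same way as Lemma~\ref{lem:tfs-tks-1}, and you have correctly merged the $F$-versus-$K$ decomposition from that lemma with the normal-subgroup coset bookkeeping of Lemma~\ref{lem:tfs-tks-2}, in particular verifying that normality lets you move $K$ across $x^*$ so that each covering translate meets $T$ in at most $t+1$ points. The one small slip is notational: $F=K\sqcup(F\setminus K)$ should read $F\subseteq K\cup(F\setminus K)$ since $K$ need not be contained in $F$, but your argument only uses the inclusion, so nothing breaks.
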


Combining with Proposition \ref{prop:prob-dis}, we have the following result.

\begin{thm}\label{thm:diff-diff-1}
Suppose that $K$ is a normal subgroup of the topological group $G$ and the interior of $K$ contains the identity. 
For any i.i.d. random variables $X$ and $Y$ taking values in $G$, and any measurable subset $F$ of $G$, we have
$$
\mP(XY^{-1} \in F)\leq (1+N(F\backslash K, K))\cdot\mP(XY^{-1}\in K).
$$
\end{thm}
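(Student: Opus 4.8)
The plan is to derive this theorem exactly as Theorem~\ref{thm:diff-diff} was derived in the abelian case: combine the combinatorial inequality of the lemma immediately preceding the statement with the transfer principle of Proposition~\ref{prop:prob-dis}, taken with $k=2$. First I would move to the product group $G\times G$ and introduce the two sets
$$
\tilde F=\{(x,y)\in G\times G:\ xy^{-1}\in F\},\qquad \tilde K=\{(x,y)\in G\times G:\ xy^{-1}\in K\}.
$$
Because $G$ is a topological group, the map $(x,y)\mapsto xy^{-1}$ is continuous, hence Borel, so $\tilde F$ and $\tilde K$ are Borel subsets of $G\times G$ whenever $F$ and $K$ are Borel. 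For a deterministic multi-set $T=\{x_1,\dots,x_m\}$ the quantity $T_m(\tilde F)$ of Section~\ref{sec:comb} is precisely $|\{(i,j):i\neq j,\ x_ix_j^{-1}\in F\}|$, and comparing with \eqref{def-tks-1} (with $K$ replaced by $F$ in the case of $\tilde F$) yields the bookkeeping identities $T_-(F,s)=ms+T_m(\tilde F)$ and $T_-(K,2s)=2ms+T_m(\tilde K)$.

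If $N(F\backslash K,K)=\infty$ the claimed inequality is vacuous, so I would assume it finite. Feeding $s=2$ into the preceding lemma gives $T_-(F,2)\le(1+N(F\backslash K,K))\,T_-(K,4)$; substituting the identities above and rearranging turns this into
$$
T_m(\tilde F)\le (1+N(F\backslash K,K))\,T_m(\tilde K)+c\,m,
$$
where $c=4N(F\backslash K,K)+2$ depends only on $N(F\backslash K,K)$. Since $c\,m=o(m^2)$, this is exactly the hypothesis \eqref{eq:assumption} of Proposition~\ref{prop:prob-dis} with $k=2$, $h_2(m)=cm$, and $C(\tilde F,\tilde K)=1+N(F\backslash K,K)$, valid for every deterministic multi-set.

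Applying Proposition~\ref{prop:prob-dis} to the i.i.d.\ $G$-valued sequence $X_1,X_2,\dots$ then gives
$$
\mP\bigl((X_1,X_2)\in\tilde F\bigr)\le \bigl(1+N(F\backslash K,K)\bigr)\,\mP\bigl((X_1,X_2)\in\tilde K\bigr),
$$
which is the assertion once one rewrites $(X_1,X_2)\in\tilde F$ as $X_1X_2^{-1}\in F$ and uses that $(X,Y)$ and $(X_1,X_2)$ have the same law. There is essentially no obstacle here: all the real content sits in the combinatorial lemma, which we may assume, and the rest is bookkeeping. The only points needing care are checking that $\tilde F$ and $\tilde K$ are measurable in the product space (immediate from continuity of $(x,y)\mapsto xy^{-1}$) and tracking the linear-in-$m$ slack so that it genuinely meets the $o(m^k)$ requirement of Proposition~\ref{prop:prob-dis}. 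I would also note that the hypothesis that $K$ contains the identity in its interior is not used in the combinatorial step, which needs only that $K$ is a normal subgroup; it merely serves to keep the conclusion from being vacuous by forcing $\mP(XY^{-1}\in K)>0$.
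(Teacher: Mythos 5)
Your proposal is correct and is precisely the argument the paper intends: the theorem is obtained by feeding the preceding combinatorial lemma (at $s=2$) into Proposition~\ref{prop:prob-dis} with $k=2$, after passing to the sets $\tilde F,\tilde K$ in the product group; your bookkeeping (including the slack $c\,m=(4N(F\setminus K,K)+2)m=o(m^2)$ and the measurability of $\tilde F,\tilde K$ via continuity of $(x,y)\mapsto xy^{-1}$) fills in exactly the details the paper leaves implicit when it says ``Combining with Proposition~\ref{prop:prob-dis}.'' Your closing remark about the interior hypothesis is a reasonable observation, though one should be a bit careful: the interior condition does not by itself guarantee $\P(XY^{-1}\in K)>0$ for arbitrary distributions, but it is indeed not invoked anywhere in the combinatorial lemma or in the transfer step.
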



\section{Small ball inequalities in vector spaces}
\label{sec:tvs}

Let $V$ be a topological vector space over a field $\mF$ with the Borel $\sigma$-algebra generated by all open sets. Let $F, K\subseteq V$ be measurable subsets and let $a, b\in\mF$. 
We consider the comparison of $\mP(aX+bY\in F)$ and $\mP(X-Y\in K)$ for i.i.d. random variables $X$ and $Y$ taking values in $V$.

\begin{thm}\label{thm:linear-combi}
Let $F$ and $K$ be measurable subsets of a topological vector space $V$ over a field $\mF$. 
Suppose that $K$ is symmetric and contains the zero vector in its interior. 
Let $a$ and $b$ be non-zero elements of $\mF$. For any i.i.d. random variables $X$ and $Y$ taking values in $V$, we have
$$ 
\mP(aX+bY\in F)\leq N(a, b, F, K)\cdot\mP(X-Y\in K),
$$
where the constant $N(a, b, F, K)$ is defined as
$$
N(a, b, F, K)=\frac{1}{2}\left(N(a^{-1}F, K)+N(b^{-1}F, K)\right).
$$
\end{thm}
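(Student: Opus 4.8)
The plan is to deduce the inequality from Proposition~\ref{prop:prob-dis} with $k=2$, after establishing the appropriate combinatorial counterpart; the latter follows from a line-by-line adaptation of the proof of Lemma~\ref{lem:tfs-tks}, the only genuinely new feature being the asymmetry between the two arguments of $aX+bY$ when $a\ne b$. Concretely, set $\tilde F=\{(x,y)\in V\times V:\ ax+by\in F\}$ and $\tilde K=\{(x,y)\in V\times V:\ x-y\in K\}$; since addition and scalar multiplication are continuous on $V$, the maps $(x,y)\mapsto ax+by$ and $(x,y)\mapsto x-y$ are continuous, so $\tilde F$ and $\tilde K$ are Borel. By Proposition~\ref{prop:prob-dis} it then suffices to exhibit a function $h_2(m)=o(m^2)$ so that, for every finite multi-set $T=\{x_1,\dots,x_m\}$ of $V$,
\[
|\{(i,j):\ i\ne j,\ ax_i+bx_j\in F\}|\ \leq\ h_2(m)+N(a,b,F,K)\cdot|\{(i,j):\ i\ne j,\ x_i-x_j\in K\}| .
\]
We may assume $F\ne\emptyset$ and that $N(a^{-1}F,K)$ and $N(b^{-1}F,K)$ are both finite (otherwise the claimed probability bound is trivial), so that $N:=N(a,b,F,K)=\frac{1}{2}\big(N(a^{-1}F,K)+N(b^{-1}F,K)\big)$ satisfies $1\le N<\infty$.

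To prove the combinatorial inequality I would set $T^{a,b}_+(F,s):=ms+|\{(i,j):\ i\ne j,\ ax_i+bx_j\in F\}|$ (as in \eqref{eq:def-tfs}), keep $T_-(K,s)$ as in \eqref{eq:def-tks}, write $T^{a,b}_+(F,s)|_{T^*}$ for this quantity evaluated on a sub-multi-set $T^*$, and show $T^{a,b}_+(F,s)\le N\cdot T_-(K,2s)$ for all $s\ge 2$ by induction on $m=|T|$. The base case $m=1$ reads $s\le 2Ns$, valid since $N\ge 1$. For the inductive step choose $t$, $x^*$ and $T^*=T\backslash\{x^*\}$ exactly as in Lemma~\ref{lem:tfs-tks}, so that, using the symmetry of $K$ and $0\in K$ as there, $T_-(K,2s)=T_-(K,2s)|_{T^*}+2s+2t$. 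The ordered pairs through $x^*$ counted by $T^{a,b}_+(F,s)$ are those of the form $(i,x^*)$, for which $ax_i+bx^*\in F$, i.e.\ $x_i\in a^{-1}(F-bx^*)$, together with those of the form $(x^*,i)$, for which $ax^*+bx_i\in F$, i.e.\ $x_i\in b^{-1}(F-ax^*)$; hence
\[
T^{a,b}_+(F,s)\ \leq\ T^{a,b}_+(F,s)|_{T^*}+s+\big|a^{-1}(F-bx^*)\cap T\big|+\big|b^{-1}(F-ax^*)\cap T\big| .
\]
Since $a^{-1}(F-bx^*)$ is a translate of $a^{-1}F$ and $b^{-1}(F-ax^*)$ a translate of $b^{-1}F$, applying the covering argument \eqref{eq:cover} separately to each bounds the two cardinalities by $(t+1)N(a^{-1}F,K)$ and $(t+1)N(b^{-1}F,K)$, whose sum is $2(t+1)N$. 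Thus $T^{a,b}_+(F,s)\le T^{a,b}_+(F,s)|_{T^*}+s+2(t+1)N$, which is exactly \eqref{eq:est-tfs-1} with $N(F,K)$ replaced by $N$; since the rest of the argument in Lemma~\ref{lem:tfs-tks} uses nothing about the constant beyond its being $\ge 1$ (the arithmetic closing once $s\ge 2N/(2N-1)$, hence for $s\ge 2$), the induction concludes. Taking $s=2$ and rearranging $T^{a,b}_+(F,2)\le N\cdot T_-(K,4)$ gives the displayed inequality with $h_2(m)=(4N-2)m=o(m^2)$, and Proposition~\ref{prop:prob-dis} delivers $\mP(aX+bY\in F)\le N(a,b,F,K)\cdot\mP(X-Y\in K)$.

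The step I expect to be the main obstacle is precisely the asymmetry created by $a\ne b$: in Lemma~\ref{lem:tfs-tks} the single condition ``$x_i\in -x^*+F$'' occurred with multiplicity two, whereas here the two slots of $aX+bY$ impose genuinely different conditions, one controlled by $a^{-1}F$ and one by $b^{-1}F$. The constant $N(a,b,F,K)$ is built so that $(t+1)N(a^{-1}F,K)+(t+1)N(b^{-1}F,K)=2(t+1)N(a,b,F,K)$ matches the quantity $2(t+1)N(F,K)$ appearing in \eqref{eq:est-tfs-1}; with this bookkeeping no new estimate is needed, and the induction closes under the same threshold $s\ge 2$. The only other thing to verify is the measurability of $\tilde F$ and $\tilde K$, which is immediate from continuity of the vector-space operations.
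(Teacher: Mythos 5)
Your proposal is correct and reproduces the paper's own argument essentially verbatim: both define the quantity $T_+(F,s,a,b)$, bound the pairs through $x^*$ by the two translates $a^{-1}(F-bx^*)$ and $b^{-1}(F-ax^*)$, apply the covering argument to each to obtain $(t+1)N(a^{-1}F,K)+(t+1)N(b^{-1}F,K)=2(t+1)N(a,b,F,K)$, and close the induction under $s\geq 2N/(2N-1)\leq 2$ before invoking Proposition~\ref{prop:prob-dis}. The only differences are cosmetic (explicit translation-invariance remark, explicit measurability check, spelling out $h_2(m)=(4N-2)m$), and these are sound.
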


\begin{proof}
The proof is essentially the same as that of Theorem \ref{thm:sum-diff}. Let $T=\{x_1,\cdots, x_m\}$ be a multiset of $V$. For $s\in\mR$, we define
$$
T_+(F, s, a, b)=ms+|\{(i, j): i\neq j,  ax_i+bx_j\in F\}|.
$$
By Proposition \ref{prop:prob-dis}, it suffices to prove the following combinatorial analog 
\begin{align}\label{eq:tfs-ab-tks}
T_+(F, s, a, b)\leq N(a, b, F, K)\cdot T_-(K, 2s)
\end{align}
for $s\geq 2$, where $T_-(K, 2s)$ is defined to be the same as \eqref{eq:def-tks}. We select $x^*$ in the same way as in Lemma \ref{lem:tfs-tks} and set $T^*=T\setminus\{x^*\}$. The estimate of $T_-(K, 2s)$ in \eqref{eq:est-tks} still holds. Similar to \eqref{eq:est-tfs}, we have
$$
T_+(F, s, a, b)\leq T^*_+(F, s, a, b)+s+|(-b^{-1}ax^*+b^{-1}F)\cap T|+|(-a^{-1}bx^*+a^{-1}F)\cap T|.
$$
Applying the covering argument \eqref{eq:cover} to $-b^{-1}ax^*+b^{-1}F$, and using the definition of $N(b^{-1}F, K)$, we will have
$$
|(-b^{-1}ax^*+b^{-1}F)\cap T|\leq(t+1)N(b^{-1}F, K).
$$
Similarly we can see that
$$
|(-a^{-1}bx^*+a^{-1}F)\cap T|\leq(t+1)N(a^{-1}F, K).
$$
Hence we have
\begin{align}\label{eq:est-tfs-ab}
T_+(F, s, a, b)\leq T^*_+(F, s, a, b)+s+2(t+1)N(a, b, F, K).
\end{align}
Then the estimate \eqref{eq:tfs-ab-tks} follows from \eqref{eq:est-tks} and \eqref{eq:est-tfs-ab}. So we complete the proof.
\end{proof}

As an easy consequence of Theorem \ref{thm:linear-combi}, we have the following extension of \eqref{eq:ay} and \eqref{eq:dll}. 

\begin{coro}\label{cor:linear-combi}
Let $a, b, c, d$ be non-zero reals and $c, d>0$. For any real-valued i.i.d. random variables $X$ and $Y$, we have
$$
\mP(|aX+bY|\leq c)\leq \frac{1}{2}\left(\left\lceil \frac{2c}{|a|d}\right\rceil+\left\lceil\frac{2c}{|b|d}\right\rceil\right)\mP(|X-Y|\leq d).
$$
\end{coro}

\begin{proof}
We can take $F=[-c, c]$ and $K=[-d, d]$. Elementary geometric argument will yield 
$$
N(a^{-1}F, K)=\left\lceil \frac{2c}{|a|d}\right\rceil,~~~N(b^{-1}F, K)=\left\lceil \frac{2c}{|b|d}\right\rceil.
$$
Then the statement follows from Theorem \ref{thm:linear-combi}.
\end{proof}

\begin{rmk}
For $a=b$, Theorem \ref{thm:linear-combi} and Corollary \ref{cor:linear-combi} match Theorem \ref{thm:sum-diff} and \eqref{eq:dll}, respectively. For $a=-b$, Theorem \ref{thm:linear-combi} and Corollary \ref{cor:linear-combi} are weaker than Theorem \ref{thm:diff-diff} and \eqref{eq:ay}, respectively. This is due to the subtle difference between the covering arguments used in the proofs. The case when $a=0$ or $b=0$ is covered by Corollary \ref{cor:sym-kat}.
\end{rmk}


\section{Discussion of tightness}
\label{sec:tight}

In this section, we study the near extremal distributions for the probabilistic estimates developed in Section \ref{sec:ab}. The discussion will  mainly 
focus on Theorem \ref{thm:sum-diff} and Theorem \ref{thm:diff-diff} for random variables taking values in the Euclidean space $\mR^d$. 
We will see their close connections with the sphere packing problem in combinatorial geometry.

In general it is hard to compute the ratios of $\mP(X\pm Y)\in F$ to $\mP(X-Y)\in K$. If $X$ and $Y$ are assumed to be uniform over a finite set $T=\{x_1, x_2, \cdots, x_n\}$, then we have
$$
\mP(X\pm Y\in F)=\frac{1}{n}\sum_{i=1}^n|(\mp x_i+F)\cap T|
$$
and
$$
\mP(X-Y\in K)=\frac{1}{n}\sum_{i=1}^n|(x_i+K)\cap T|.
$$
If the set $T$ is $K$-separated, i.e., $x_i-x_j\not\in K$ for $i\neq j$, we will have $|(x_i+K)\cap T|=1$ for all $x_i\in T$. We can even make a further assumption that, except $o(n)$ of them, all the sets $\mp x_i+F$ contain the same number of elements of $T$. This is possible if $T$ is selected to consist of certain lattice points. (So we need $X$ and $Y$ to be in a topological vector space $V$). Under these assumptions, we have 
$$
\frac{\mP(X\pm Y\in F)}{\mP(X-Y\in K)}\rightarrow \max_{x\in T}|(\mp x+F)\cap T|,~~\text{as}~n\rightarrow\infty.
$$
Theorem \ref{thm:sum-diff} will be tight if there exists a $K$-separated lattice $\mathcal{L}$ and a point $x\in V$ (not necessary a lattice point) such that $x+F$ contains $N(F, K)$ points of $\mathcal{L}$. The set $T$ can be taken as the union of a subset of $\mathcal{L}$ and the reflection of this subset after certain translation. Similarly, Theorem \ref{thm:diff-diff} is tight if for every lattice point $x\in\mathcal{L}$ the set $x+(F\backslash K)$ contains $N(F\backslash K, K)$ points of $\mathcal{L}$. In this case, we only need to take $T$ to be certain subset of the lattice $\mathcal{L}$. This idea can be used to produce near optimal distributions for the estimates \eqref{eq:dll} and \eqref{eq:ay}. For the estimate \eqref{eq:dll}, we can take $X$ to be uniform over $\{-(n-1)\delta-r, \cdots, -\delta-r, \delta, 2\delta, \cdots, n\delta\}$, where $r>0$ and $\delta>a$. For any $a, b>0$, we can always select appropriate $r$ and $\delta$ such that the ratio of $\mP(|X+Y|\leq b)$ to $\mP(|X-Y|\leq a)$ approaches $\lceil 2b/a\rceil$ as $n\rightarrow\infty$. This example is essentially the same as the one given in \cite{DLL15}. To see the sharpness of \eqref{eq:ay}, we can take $X$ to be uniform over $\{\delta, 2\delta, \cdots, n\delta\}$ for certain $\delta>a$. This example was given in \cite{AY95}.

In the Euclidean space $\mR^d$, let us take $F$ and $K$ to be closed balls centered at the origin of radius $r$ and $1$, respectively. For simplicity, we let $N_+(r)$ and $N_-(r)$ denote $N(F, K)$ and $N(F\backslash K, K)+1$, respectively. Then $N_+(r)$ represents the maximal number of points in a Euclidean ball of radius $r$ with all mutual distances greater than 1. We have an extra restriction on $N_-(r)$  that one of these points should be at the center of the ball. These are the so-called sphere packing problems. The dual problem of $N_+(r)$ asks for the smallest radius $r_+(n)$ of the ball to contain $n$ points with mutual distances at least 1. 
We define $r_-(n)$ in a similar way with the restriction that one of the points should be at the center of the ball. 

Bateman and Erd\H{o}s \cite{BE51} studied the diameters of the extremal configurations of points on the plane. Their results imply the values of $r_+(n)$ and, by the duality, $N_+(r)$ in $\mR^2$. We have the following list of $N_+(r)$ for $r$ in certain range.
$$
N_+(r)=  
\begin{cases}
1, &\text{ if } 0<r\leq 1/2\\
2, &\text{ if } 1/2<r\leq \sqrt 3/3\\ 
3, &\text{ if }  \sqrt 3/3<r\leq \sqrt2/2\\
4, &\text{ if }  \sqrt2/2<r\leq  \frac{1}{2}\csc(\pi/5)\\
5, &\text{ if }   \frac{1}{2}\csc(\pi/5)<r\leq 1\\
7, &\text{ if }  1<r\leq 1+\epsilon,~ \text{small}~ \epsilon>0.\\
\end{cases}
$$
The extremal configurations given by Bateman and Erd\H{o}s are lattice points. Hence the values of $N_+(r)$ 
in the list are tight. It is easy to see that $r_-(2)=\cdots=r_-(7)=1$ with all the points on the unit circle except one point at the center. Bateman and Erd\H{o}s also gave the values of $r_-(n)$ 
for $n=8, 9, 10, 11$. This yields the following list of $N_-(r)$.
$$
N_-(r)=  
\begin{cases}
7, &\text{ if } 1<r\leq \frac{1}{2}\csc(\pi/7)\\
8, &\text{ if }  \frac{1}{2}\csc(\pi/7)<r\leq \frac{1}{2}\csc(\pi/8)\\
9, &\text{ if }  \frac{1}{2}\csc(\pi/8)<r\leq \frac{1}{2}\csc(\pi/9)\\
10, &\text{ if }  \frac{1}{2}\csc(\pi/9)<r\leq \frac{1}{2}\csc(\pi/10).\\
\end{cases}
$$ 

For the sphere packing problems in $\mR^d$, people are generally interested in the optimal packing density $\rho_d$. 
When $r$ is large, it is not hard to see the following asymptotic estimate
$$
N_+(r)\approx N_-(r)=(1+o(1))\frac{\text{vol}(B(r))}{\text{vol}(B(1/2))}\rho_d=(1+o(1))(2r)^d\rho_d.
$$
In $\mR^2$, it is known that hexagonal lattice packing is optimal among all packings (not necessary 
lattice packings) with packing density $\rho_2=\sqrt3\pi/6\approx 0.9069$. 
The sphere packing problem in three-dimensional Euclidean space has a long history. Kepler conjectured in 1611 that 
no arrangement of equally sized spheres can fill the space with a greater average density than that of the 
face-centered cubic and hexagonal close packing arrangements. The density of these arrangements is 
$\sqrt2\pi/6\approx0.7404$. Gauss proved in 1831 that Kepler's conjecture is true if the spheres have 
to be arranged in a regular lattice. The complete proof of Kepler's conjecture was announced only around
20 years ago by Hales (see, e.g., \cite{Hal05}), and a formal verification using automated proof checking was only completed in 2014. 
In the very recent breakthrough work \cite{Via17}, Viazovska proved that the $E_8$ lattice packing gives the 
optimal packing density in dimension 8, and the density is $\rho_8=\pi^4/384\approx0.025367$. 
Building on Viazovska's work, it is shown in \cite{CKMRV17} that Leech lattice packing is optimal in 24 dimension, 
and the packing density is $\rho_{24}=\pi^{12}/12!\approx0.00193$. 

Another interesting problem related to our study is the kissing number problem. In three dimensions it asks how many billiard balls can be arranged so that they all just touch another billiard ball of the same size. This question was a subject of a famous debate between Isaac Newton and David Gregory in 1690's. Newton believed the answer was 12, while Gregory though that 13 might be possible. Generally we can define the $d$-dimensional kissing number $\tau_d$ as the maximal number of points on the unit sphere with Euclidean distances at least 1. For $1<r<1+\epsilon_d$ with small $\epsilon_d>0$, we have the following equation
\begin{align}\label{eq:n-tau}
N_-(r)=\tau_d+1.
\end{align}
The number $\tau_3=12$ was studied by various researchers in the nineteenth century. The best proof now available is due to Leech \cite{Lee56}. The answers $\tau_8=240$ and $\tau_{24}=196,560$ are given by \cite{OS79} and \cite{Lev79}, respectively. It is somewhat surprising that they are technically easier to establish than $\tau_3$. The correct answer $\tau_4=24$ was obtained much later by Musin \cite{Mus03}. For all these results, the extremal configurations follows from lattice packings. Using the relation \eqref{eq:n-tau}, Theorem \ref{thm:diff-diff} can give explicit tight estimates for $r$ slightly greater than 1 in corresponding dimensions. These are all the known values of the kissing number so far. In high dimensions, $\tau_d$ grows exponentially with unknown base. We refer to the monograph \cite{CS99:book} for more discussions of sphere packing problems and their relations with number theory and coding theory.


\section{Applications}
\label{sec:appln}


\subsection{Concentration functions}

Let $G$ be an abelian topological group and let $F\subseteq G$ be a measurable subset. For a random variable $X$ taking values in $G$, the generalized L\'evy's concentration function $Q(X, F)$ is defined to be
\begin{align}\label{eq:con-def}
Q(X, F)=\sup_{x\in G}\mP(X\in x+F).
\end{align}
The main study of concentration functions is devoted to the sum of independent random variables in Banach spaces 
with $F$ taken to be norm balls (see, e.g., \cite{HT73:book, Kan76, MR80}). In the i.i.d. case, Theorem~\ref{thm:sum-diff} 
provides a symmetrization technique to treat different sets and also general groups where no norm may exist. 

For a random variable $X$, we denote by $\widetilde{X}$ the symmetrized random variable $X-Y$, where $Y$ is an independent copy of $X$.

\begin{thm}\label{thm:con-sumdiff}
Let $G$ be an abelian topological group, and let $K$ be a symmetric measurable subset of $G$ such that the interior of $K$ contains the identity of $G$.  
For any independent (not necessarily identical) random variables $X_1, \cdots, X_n$ taking values in $G$, and any measurable subset $F$ of $G$, we have
\begin{align}\label{eq:lienar-com-concen1}
(Q(X_1+\cdots+X_n, F))^2\leq N(F+F, K)\cdot Q(\widetilde{X}_1+\cdots+\widetilde{X}_n, K).
\end{align}
If $X_1, \cdots, X_n$ also have identical distribution, then we have
\begin{align}\label{eq:lienar-com-concen2}
Q(X_1+\cdots+X_n, F)\leq N(F, K)\cdot Q(\widetilde{X}_1+\cdots+\widetilde{X}_{\lfloor n/2\rfloor}, K).
\end{align}
\end{thm}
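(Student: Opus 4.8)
The plan is to use the symmetrization trick that appears repeatedly in this area, reducing statements about sums of independent variables to statements about i.i.d. pairs so that Theorem~\ref{thm:sum-diff} (for the second inequality) and Theorem~\ref{thm:katona-type} (for the first) can be applied coordinatewise. First I would fix $x^*\in G$ achieving (or nearly achieving) the supremum defining $Q(X_1+\cdots+X_n,F)$, write $S=X_1+\cdots+X_n$, and note that the event $\{S\in x^*+F\}$ only involves the joint law of $(X_1,\dots,X_n)$. Let $(X_1',\dots,X_n')$ be an independent copy of $(X_1,\dots,X_n)$, so $\widetilde X_i=X_i-X_i'$ are independent and each symmetric. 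The key observation for \eqref{eq:lienar-com-concen1} is that if both $S\in x^*+F$ and $S'=X_1'+\cdots+X_n'\in x^*+F$ occur, then $\widetilde X_1+\cdots+\widetilde X_n=S-S'\in F-F$. Since $S$ and $S'$ are independent with the same distribution, $(\P(S\in x^*+F))^2=\P(S\in x^*+F,\,S'\in x^*+F)\leq \P(S-S'\in F-F)$. This already gives $(Q(X_1+\cdots+X_n,F))^2\le Q(\widetilde X_1+\cdots+\widetilde X_n,\,F-F)$; but here one should instead invoke the combinatorial/packing content directly. Concretely, apply Theorem~\ref{thm:katona-type} (or rather its underlying combinatorial lemma) with the roles set up so that the constant that emerges is $N(F+F,K)$: since $F-F=F+F$ up to reflection does not literally hold in a general abelian group unless $F$ is symmetric, the honest route is to observe $S-S'\in F-(-F)$... and this is exactly where one must be careful.

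A cleaner way to organize \eqref{eq:lienar-com-concen1}: note $S+S'\in 2x^*+(F+F)$ on the event $\{S\in x^*+F,\,S'\in x^*+F\}$, and $S-S'$ is the symmetrization of $S$. So one wants to compare $\P(S+S'\in 2x^*+(F+F))$ with $\P(S-S'\in K)$ — but $S$ and $S'$ here are genuinely i.i.d. (being independent copies of the whole vector sum), hence Theorem~\ref{thm:sum-diff} applies with the pair $(S,S')$, the set $2x^*+(F+F)$ in place of $F$, and $K$ in place of $K$, yielding $\P(S+S'\in 2x^*+(F+F))\le N(2x^*+(F+F),K)\cdot\P(S-S'\in K)=N(F+F,K)\cdot\P(\widetilde X_1+\cdots+\widetilde X_n\in K)$, using translation-invariance of $N(\cdot,K)$. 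Combining with $(\P(S\in x^*+F))^2\le \P(S+S'\in 2x^*+(F+F))$ and taking the supremum over $x^*$ gives \eqref{eq:lienar-com-concen1}. Note this argument uses independence but not identical distribution, as required.

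For \eqref{eq:lienar-com-concen2}, now $X_1,\dots,X_n$ are i.i.d., and I would split the sum: write $n=\lfloor n/2\rfloor+\lceil n/2\rceil$, group $A=X_1+\cdots+X_{\lceil n/2\rceil}$ and $B=X_{\lceil n/2\rceil+1}+\cdots+X_n$. If $n$ is even, $A$ and $B$ are i.i.d., and $A+B=S$, $A-B$ is a sum of $\lfloor n/2\rfloor$ independent symmetrized copies $\widetilde X_i$ (pair $X_i$ with $X_{n/2+i}$). Applying Theorem~\ref{thm:sum-diff} to the i.i.d. pair $(A,B)$ with the set $x^*+F$: $\P(S\in x^*+F)=\P(A+B\in x^*+F)\le N(x^*+F,K)\cdot\P(A-B\in K)=N(F,K)\cdot Q(\widetilde X_1+\cdots+\widetilde X_{n/2},K)$, and supping over $x^*$ finishes it. The only real subtlety, and the step I expect to be the main obstacle, is the odd case: when $n$ is odd, $A$ and $B$ have different numbers of summands, hence are not identically distributed, so Theorem~\ref{thm:sum-diff} does not directly apply to $(A,B)$. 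I would handle this by introducing one extra independent copy $X_{n+1}\sim X_1$, so that $S+X_{n+1}$ has an even number $n+1$ of i.i.d. summands; bounding $Q(S,F)\le Q(S+X_{n+1},F)$ is false in general, so instead I would write $S=A+B$ with $A$ having $\lceil n/2\rceil$ terms and $B$ having $\lceil n/2\rceil$ terms where $B$ secretly includes a fictitious copy whose contribution we control — more robustly, condition on one coordinate: fix a value for $X_n$, apply the even case to the remaining $n-1$ i.i.d. variables (getting the constant $N(F,K)$ and a symmetrized sum of $(n-1)/2=\lfloor n/2\rfloor$ terms), and integrate back. Working out that conditioning argument cleanly, and confirming the floor count $\lfloor n/2\rfloor$ is exactly what comes out in both parities, is the part that needs genuine care; everything else is the standard symmetrization-plus-translation-invariance routine.
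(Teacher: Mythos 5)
Your argument matches the paper's: for \eqref{eq:lienar-com-concen1} you symmetrize the full sum $S$ and apply Theorem~\ref{thm:sum-diff} to the i.i.d.\ pair $(S,S')$ with the translated set $2x^*+(F+F)$, using translation invariance of $N(\cdot,K)$; and for \eqref{eq:lienar-com-concen2} you pair off summands and apply Theorem~\ref{thm:sum-diff} to the two half-sums. Your conditioning device for odd $n$ is exactly the proof of the clean monotonicity lemma the paper uses, namely $Q(X+Y,F)\le Q(X,F)$ for independent $X,Y$, which lets one pass from $S$ to $X_1+\cdots+X_{2\lfloor n/2\rfloor}$ before pairing --- so extracting that one-line lemma would streamline your write-up but changes nothing essential.
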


\begin{proof}
By the definition of generalized L\'evy's concentration function \eqref{eq:con-def}, for any $0<\epsilon<1$, there exists $x\in G$ such that
\begin{align*}
(Q(X_1+\cdots+X_n, F))^2 &\leq (\mP(X_1+\cdots+X_n\in x+F)+\epsilon)^2\\
&\leq \mP(X_1+\cdots+X_n\in x+F)\cdot\mP(X_1'+\cdots+X_n'\in x+F)+3\epsilon\\
&\leq \mP(X_1+\cdots+X_n+X_1'\cdots+X_n'\in 2x+F+F)+3\epsilon\\
&\leq N(F+F, K)\cdot\mP(X_1-X_1'+\cdots+X_n-X_n', K)+3\epsilon\\
&\leq N(F+F, K)\cdot Q(\widetilde{X}_1+\cdots+\widetilde{X}_n, K)+3\epsilon,
\end{align*}
where $(X_1',\cdots, X_n')$ is an independent copy of $(X_1,\cdots, X_n)$, and the 2nd last inequality follows from Theorem \ref{thm:sum-diff}. Then \eqref{eq:lienar-com-concen1} follows by letting $\epsilon\to0$. To prove \eqref{eq:lienar-com-concen2}, it is not hard to see that for independent random variables $X$ and $Y$ 
$$
Q(X+Y, F)\leq Q(X, F).
$$
Therefore we have
$$
Q(X_1+\cdots+X_n, F) \leq Q(X_1+\cdots+X_{2\lfloor n/2\rfloor}, F).
$$
Similarly we can select $x\in G$ such that
\begin{align*}
Q(X_1+\cdots+X_{2\lfloor n/2\rfloor}, F) &\leq \mP(X_1+\cdots+X_{2\lfloor n/2\rfloor}\in x+F)+\epsilon\\
&\leq N(F, K)\cdot \mP(X_1-X_2+\cdots+X_{2\lfloor n/2\rfloor-1}-X_{2\lfloor n/2\rfloor}\in K)+\epsilon\\
&\leq N(F, K)\cdot Q(\widetilde{X}_1+\cdots+\widetilde{X}_{\lfloor n/2\rfloor}, K)+\epsilon.
\end{align*}
In the 2nd inequality we use Theorem \ref{thm:sum-diff}, and \eqref{eq:lienar-com-concen2} follows by letting $\epsilon\to0$.
\end{proof}

If $F$ and $K$ are taken to be balls of vanishing radius in a metrizable, locally compact abelian group, then Theorem \ref{thm:con-sumdiff} 
yields an inequality relating the essential supremums
of the densities of $X+Y$ and $X-Y$. This may be interpreted as relating the R\'enyi entropies of order $\infty$ of $X+Y$ and $X-Y$. 

Let $X$ be a random variable taking values in a locally compact abelian group $G$. Suppose it has density $f$ with respect to the invariant Haar measure $\mu$. For $p\in(0, 1)\cup (1, \infty)$, the R\'enyi entropy of order $p$ (i.e., $p$-R\'enyi entropy) is defined as
$$
h_p(X)=\frac{1}{1-p}\log\int_{G}f(x)^p\mu(dx).
$$
Defining by continuity, $h_1(X)$ corresponds to the classical Shannon entropy. By taking limits, we have
$$
h_0(X)=\log\mu(\mathrm{supp}(f))
$$
and
$$
h_\infty(X)=-\log\|f\|_\infty,
$$
where $\mathrm{supp}(f)$ is the support of $f$, and $\|f\|_\infty$ is the essential supremum of $f$.

We need the following definition before introducing our statement. Let $f$ be a real or complex valued locally integrable function over a metric space with certain reference measure $\mu$. We say that $x$ is a Lebesgue point of $f$ if 
$$
\lim_{r\to0}\frac{1}{\mu(B(x, r))}\int_{B(x, r)}f(y)\mu(dy)=f(x),
$$
where $B(x, r)$ is the ball of radius $r$ centered at $x$.  

\begin{thm}
Let $G$ be a metrizable, locally compact abelian group with the Haar measure $\mu$. Suppose that the following two conditions hold:
\begin{enumerate}
\item For any locally integrable function, almost every point of $G$ is a Lebesgue point.
\item We have $\limsup_{r\to0}N(B_{r}, B_{r})<\infty$, where $B_r$ is the ball of radius $r$ centered at the identity of $G$.
\end{enumerate}
For any i.i.d. absolute continuous random variables $X$ and $Y$ in $G$, we have
$$
h_\infty(X+Y)- h_\infty(X-Y)\geq-\log\left(\limsup_{r\to0}N(B_{r}, B_{r})\right).
$$
\end{thm}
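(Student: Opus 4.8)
The plan is to obtain the stated comparison of R\'enyi entropies from Theorem~\ref{thm:sum-diff} by running it on metric balls of vanishing radius and then converting the resulting distribution-free probabilistic inequality into one between essential suprema of densities. Put $C=\limsup_{r\to0}N(B_r,B_r)$, a positive integer which is finite by hypothesis~(2), and let $f_{X+Y}$ and $f_{X-Y}$ denote the Haar densities of $X+Y$ and $X-Y$ (they exist, since absolute continuity is preserved under convolution). I would first establish the density bound $\|f_{X+Y}\|_\infty\le C\,\|f_{X-Y}\|_\infty$ and then read off the entropy inequality via $h_\infty(Z)=-\log\|f_Z\|_\infty$. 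It suffices to treat the case $\|f_{X-Y}\|_\infty<\infty$, the other being vacuous. Throughout I would fix a translation-invariant compatible metric on $G$, which exists by metrizability, so that $B(x,r)=x+B_r$ and $\mu(B(x,r))=\mu(B_r)$ for all $x,r$, and so that $B_r$ is symmetric with the identity in its interior --- the conditions under which Theorem~\ref{thm:sum-diff} applies with $K=B_r$.

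Two ingredients feed into the density bound. The first is the trivial estimate
\[
\mP(X-Y\in B_r)=\int_{B_r}f_{X-Y}\,d\mu\le\|f_{X-Y}\|_\infty\,\mu(B_r),
\]
which uses only that $f_{X-Y}\le\|f_{X-Y}\|_\infty$ almost everywhere. The second is where hypothesis~(1) enters: I would invoke the Lebesgue differentiation property to locate a point at which small-ball averages of $f_{X+Y}$ are essentially maximal. Given $\varepsilon>0$, the set $\{f_{X+Y}>\|f_{X+Y}\|_\infty-\varepsilon\}$ has positive Haar measure, so by hypothesis~(1) it contains a Lebesgue point $p$ of $f_{X+Y}$, and then
\[
\frac{\mP(X+Y\in p+B_r)}{\mu(B_r)}=\frac{1}{\mu(B(p,r))}\int_{B(p,r)}f_{X+Y}\,d\mu\;\xrightarrow[r\to0]{}\;f_{X+Y}(p)>\|f_{X+Y}\|_\infty-\varepsilon .
\]

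Next I would combine these using Theorem~\ref{thm:sum-diff} with $F=p+B_r$ and $K=B_r$, together with the translation invariance of $N(\cdot,K)$ in its first argument:
\[
\mP(X+Y\in p+B_r)\le N(p+B_r,B_r)\,\mP(X-Y\in B_r)=N(B_r,B_r)\,\mP(X-Y\in B_r)\le N(B_r,B_r)\,\|f_{X-Y}\|_\infty\,\mu(B_r).
\]
Dividing by $\mu(B_r)$ and using the second ingredient, for all sufficiently small $r$ one has $\|f_{X+Y}\|_\infty-2\varepsilon<N(B_r,B_r)\,\|f_{X-Y}\|_\infty$; letting $r\to0$ gives $\|f_{X+Y}\|_\infty-2\varepsilon\le\bigl(\liminf_{r\to0}N(B_r,B_r)\bigr)\|f_{X-Y}\|_\infty\le C\,\|f_{X-Y}\|_\infty$, and then $\varepsilon\to0$ yields $\|f_{X+Y}\|_\infty\le C\,\|f_{X-Y}\|_\infty$. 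Taking logarithms and recalling $h_\infty(Z)=-\log\|f_Z\|_\infty$ then delivers the entropy inequality in the statement. (The sharper $\liminf$ bound also shows that, under hypothesis~(2), $\|f_{X-Y}\|_\infty<\infty$ forces $\|f_{X+Y}\|_\infty<\infty$, so all logarithms above are of finite positive quantities and no degenerate subcase arises.)

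I expect the second ingredient to be the main obstacle: exhibiting a point at which the local averages of $f_{X+Y}$ over shrinking balls recover $\|f_{X+Y}\|_\infty$ is exactly where hypothesis~(1) (a Lebesgue differentiation theorem on $G$) is indispensable, and it also leans on the mildly delicate point that the metric balls can be taken translation invariant, so that the normalizing factor $\mu(B(p,r))$ coincides with $\mu(B_r)$. Everything else --- the elementary bound on $\mP(X-Y\in B_r)$, the invocation of Theorem~\ref{thm:sum-diff}, and the passage to $\liminf$/$\limsup$ --- is routine.
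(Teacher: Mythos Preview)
Your proposal is correct and follows essentially the same route as the paper: locate a near-maximizing Lebesgue point of $f_{X+Y}$, apply the sum--difference small ball inequality on shrinking balls, bound $\mP(X-Y\in B_r)$ by $\|f_{X-Y}\|_\infty\,\mu(B_r)$, divide by $\mu(B_r)$ and let $r\to0$. The only cosmetic differences are that the paper routes the comparison through the concentration function (Theorem~\ref{thm:con-sumdiff}) rather than invoking Theorem~\ref{thm:sum-diff} directly, and splits into cases on whether $\|f_{X+Y}\|_\infty$ is finite rather than on $\|f_{X-Y}\|_\infty$; your direct application and your $\liminf$ remark are, if anything, marginally sharper.
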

 \begin{proof}
 It is clear that $X+Y$ and $X-Y$ are also absolute continuous random variables. We denote by $f_{X+Y}$ and $f_{X-Y}$ the densities of $X+Y$ and $X-Y$, respectively, with respect to the Haar measure $\mu$. We denote by $\|f_{X+Y}\|_\infty$ and $\|f_{X-Y}\|_\infty$ the essential supremums of $f_{X+Y}$ and $f_{X-Y}$, respectively. We consider the following two possible cases.\\
{\bf Case 1:} $\|f_{X+Y}\|_\infty<\infty$. For any $\epsilon>0$, we can find a Lebesgue point of $f_{X+Y}$, say $z\in G$, such that 
 $$
 f_{X+Y}(z)\geq \|f_{X+Y}\|_\infty-\epsilon.
 $$
The existence of such a point is due to the first assumption. Since $z$ is a Lebesgue point, when $r>0$ is small, we have
 \begin{align*}
 \int_{B(z, r)} f_{X+Y}(w)\mu(dw) &\geq ( f_{X+Y}(z)-\epsilon)\mu(B(z, r))\\
 &\geq (\|f_{X+Y}\|_\infty-2\epsilon)\mu(B_r).
 \end{align*}
In the 2nd inequality we used the invariant property of Haar measure: $\mu(B(z, r))=\mu(B_r)$. Notice that
 \begin{align*}
 \int_{B(z, r)} f_{X+Y}(w)d\mu(w) &\leq Q(X+Y, B_{r})\\
 &\leq N(B_{r}, B_{r}) Q(X-Y, B_{r})\\
 &\leq N(B_{r}, B_{r})\|f_{X-Y}\|_\infty\mu(B_r).
 \end{align*}
 The 2nd inequality follows from Theorem \ref{thm:con-sumdiff}. So we have
 $$
 \|f_{X+Y}\|_\infty-2\epsilon\leq N(B_{r}, B_{r})\|f_{X-Y}\|_\infty.
 $$
As $\epsilon\to 0$, we will have
$$
 \|f_{X+Y}\|_\infty\leq \limsup_{r\to0}N(B_{r}, B_{r})\cdot\|f_{X-Y}\|_\infty,
 $$
 which is equivalent to the desired result. \\
 {\bf Case 2:} $\|f_{X+Y}\|_\infty=\infty$. Similarly for any $M>0$, we can find a Lebesgue point of $f_{X+Y}$, say $z\in G$, such that 
 $$
 f_{X+Y}(z)\geq M.
 $$
For any $\epsilon>0$, the same argument implies that for small $r>0$ we have
$$
M-\epsilon\leq N(B_{r}, B_{r})\|f_{X-Y}\|_\infty.
$$
It is clear that $N(B_{r}, B_{r})\geq 1$. Since $M$ can be arbitrarily large, we have
$$
\|f_{X-Y}\|_\infty=\infty.
$$
In this case the statement trivially holds.
\end{proof}

\begin{rmk}
For the Euclidean space $\mR^d$, the first assumption follows from Lebesgue's differentiation theorem and the second assumption trivially holds since $N(B_{r}, B_{r})$ is independent of the radius $r$. For $d=1$ we know that $N(B_{r}, B_{r})=2$. Therefore for real-valued i.i.d. random variables $X$ and $Y$ we have
$$
h_\infty(X+Y)-h_\infty(X-Y)\geq-\log2.
$$
This is somewhat surprising, since such a result does not hold for R\'enyi entropies of order 0 and 1. 
Indeed, the comparison of $h_p(X+Y)$ and $h_p(X-Y)$ for $p=0, 1$ is closely related to the study of 
more-sums-than-differences sets in additive combinatorics. This is discussed in \cite{ALM17},
where it is also shown that $h_1(X+Y)-h_1(X-Y)$ can be made to take any real value if $X$ and $Y$
are identically distributed on the real line. 
\end{rmk}

If $G$ is a topological vector space, we can apply Theorem \ref{thm:linear-combi} to the study of concentration functions of linear combinations of i.i.d. random variables.

\begin{coro}
Let $F$ and $K$ be measurable subsets of a topological vector space $V$ over a field $\mF$. 
Suppose that $K$ is symmetric and contains the zero vector in its interior. 
Let $a$ and $b$ be non-zero elements of $\mF$. For any i.i.d. random variables $X$ and $Y$ taking values in $V$, we have
$$
Q(aX+bY, F)\leq N(a, b, F, K)\cdot Q(X-Y, K).
$$
\end{coro}

\begin{proof}
By the definition of generalized L\'evy's concentration function \eqref{eq:con-def}, for any $\epsilon>0$, there exists $x\in V$ such that
\begin{align*}
Q(aX+bY, F) &< \mP(aX+bY\in x+ F)+\epsilon\\
&\leq N(a, b, F, K)\cdot \mP(X-Y\in K)+\epsilon\\
&\leq N(a, b, F, K)\cdot Q(X-Y, K)+\epsilon.
\end{align*}
In the 2nd inequality we use Theorem \ref{thm:linear-combi}. Since $\epsilon>0$ is arbitrary, the statement follows by letting $\epsilon\rightarrow0$.
\end{proof}

\begin{rmk}
It is natural to consider possible extensions of the result for more than 2 i.i.d. random variables. For instance, it is an interesting question to 
study the variation of \eqref{eq:lienar-com-concen2} with the left hand side replaced by $Q(a_1X_1+\cdots+a_nX_n, F)$, 
when $X_1, \cdots, X_n$ take values in a vector space and $a_1, \cdots, a_n$ are arbitrary coefficients in a field. 
Such questions are related to the Littlewood-Offord phenomenon (see, e.g., \cite{NV13, MWW17:1}). The simplest 
manifestation of the latter is the fact that when $X_i$ are Bernoulli random variables taking values 0 and 1 with equal probability, 
and $a_1, \ldots, a_n$ are positive integers, 
then 
$$
Q(a_1X_1+\cdots+a_nX_n, 1/2)\leq Q(X_1+\cdots+X_n, 1/2)= O(n^{-1/2}).
$$ 
Moreover, when the sequence of coefficients is not allowed to have additive structur, the asymptotic behavior of the concentration function changes;
for example, if the coefficients are forced to all be distinct, the maximal probability decays like $O(n^{-3/2})$.
While there is a long history of work in this direction as surveyed for example in \cite{NV13, GEZ16}, 
it would be very interesting to understand these questions better in general settings, particularly by identifying
extremal coefficients, which are only known in very few cases.
\end{rmk}


\subsection{Moment inequalities}

Unless otherwise specified, we let $V$ denote a topological vector space over the field $\mF$, which may either be
the real field $\mR$ or the complex field $\mC$. Let $\|\cdot\|$ and $\opnorm{\cdot}$ be two equivalent norms on $V$. 
We denote by $I$ the identity operator from $\left(V, \|\cdot\|\right)$ to $\left(V, \opnorm{\cdot}\right)$. By convention its norm $\opnorm{I}$ is defined as
$$
\opnorm{I}=\sup_{\|x\|=1}\opnorm{x}.
$$
Let $B_1(r)$, $B_2(r)$ be the closed balls centered at the origin of radius $r$ under the gauges $\|\cdot\|$, $\opnorm{\cdot}$, respectively. 
It is clear that $\opnorm{I}$ has the following geometric interpretation
\begin{align}\label{eq:def-I}
\opnorm{I}=\inf\{r>0: B_1(1)\subseteq B_2(r)\}.
\end{align}

\begin{thm}\label{thm:holder}
Let $a, b\in\mF$ be non-zero numbers, and let $q\geq p$ be real numbers such that $pq>0$. 
For any i.i.d. random variables $X$ and $Y$ taking values in $V$, we have
\begin{align}\label{eq:holder}
\left(\mE\opnorm{X-Y}^p\right)^{1/p}\leq 2\opnorm{I}\max\left\{|a|^{-1}, |b|^{-1}\right\}\cdot\left(\mE\|aX+bY\|^q\right)^{1/q}.
\end{align}
\end{thm}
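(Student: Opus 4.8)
The plan is to reduce \eqref{eq:holder} to a single small-ball comparison between the two norms, obtained from Theorem~\ref{thm:linear-combi}, and then to pass from probabilities to moments by a routine layer-cake argument combined with Lyapunov's monotonicity of $L^p$-norms. Throughout set $c = 2\|I\|\max\{|a|^{-1},|b|^{-1}\}$, and recall from \eqref{eq:def-I} that $\|x\|_2 \le \|I\|\,\|x\|_1$ for every $x\in V$.

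First I would establish the key probabilistic estimate: for every $r>0$,
$$
\P\big(\|aX+bY\|_1 \le r\big) \le \P\big(\|X-Y\|_2 \le cr\big).
$$
To prove this, apply Theorem~\ref{thm:linear-combi} with $F=B_1(r)$ and $K=B_2(cr)$; these are Borel sets, and $K$ is symmetric with the origin in its interior. The point is that the constant $N(a,b,F,K)=\tfrac12\big(N(a^{-1}F,K)+N(b^{-1}F,K)\big)$ collapses to $1$: since $a^{-1}F=B_1(r/|a|)$, any two points $u,v$ of this set satisfy $\|u-v\|_2\le \|I\|\,\|u-v\|_1\le \|I\|\cdot 2r/|a| \le cr$, so $u-v\in K$; hence the only admissible subsets in the definition \eqref{eq:def-nfk} of $N(a^{-1}F,K)$ are singletons, giving $N(a^{-1}F,K)=1$, and the identical computation with $b$ in place of $a$ gives $N(b^{-1}F,K)=1$. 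Thus $\P(aX+bY\in B_1(r))\le \P(X-Y\in B_2(cr))$, which is the displayed inequality.

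Now I would read this as a statement about distribution functions: it says precisely that $c^{-1}\|X-Y\|_2$ is stochastically dominated by $\|aX+bY\|_1$. Applying the function $t\mapsto t^p$ (increasing if $p>0$, decreasing if $p<0$) and then raising to the power $1/p$ — which reverses the inequality exactly when $p<0$, compensating for the reversal coming from $t\mapsto t^p$ — yields in both sign cases
$$
\big(\mE\|X-Y\|_2^p\big)^{1/p} \le c\,\big(\mE\|aX+bY\|_1^p\big)^{1/p} \le c\,\big(\mE\|aX+bY\|_1^q\big)^{1/q},
$$
where the last inequality is Lyapunov's inequality for the exponents $p\le q$ of common sign (for negative exponents this follows from the positive-exponent case applied to $\|aX+bY\|_1^{-1}$). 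This is exactly \eqref{eq:holder}. The degenerate situations in which a side is infinite, or $aX+bY=0$ with positive probability, are disposed of separately and trivially, so I would relegate them to a remark.

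The only nontrivial point — and hence the main obstacle — is the identification $N(a,b,B_1(r),B_2(cr))=1$: one has to see that $B_2(cr)$, with $c = 2\|I\|\max\{|a|^{-1},|b|^{-1}\}$, is exactly wide enough to contain the $\|\cdot\|_2$-diameter of both $a^{-1}B_1(r)$ and $b^{-1}B_1(r)$, which is where the operator norm $\|I\|$ and the factor $2$ in the final constant originate. Everything after this step is the standard passage from small-ball probabilities to moments.
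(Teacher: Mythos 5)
Your proof is correct and follows essentially the same route as the paper: the same reduction to $q=p$ via Lyapunov/H\"older, the same use of Theorem~\ref{thm:linear-combi} with a ball-versus-ball choice of $F,K$ making the entropy number collapse to $1$ via a diameter bound involving $\|I\|$, and the same layer-cake passage from the resulting small-ball comparison to moments. The only cosmetic difference is that you invoke Theorem~\ref{thm:linear-combi} once with $F=B_1(r)$, $K=B_2(cr)$ and fold the norm comparison $\|\cdot\|_2\le\|I\|\,\|\cdot\|_1$ into the diameter estimate, whereas the paper first replaces $B_1(t)$ by $B_2(t\|I\|)$ and then applies the theorem with both balls in the $\|\cdot\|_2$ norm.
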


\begin{proof}
We assume that the right hand side of \eqref{eq:holder} is finite. Otherwise the theorem yields a trivial result. 
For $0<p<q$ and $p<q<0$, (equivalently $p<q$ and $pq>0$), the following H\"{o}lder's inequality holds 
$$
\left(\mE\|aX+bY\|^p\right)^{1/p}\leq\left(\mE\|aX+bY\|^q\right)^{1/q}.
$$
Therefore the theorem follows from the case $q=p$. For $p>0$ we have
\begin{align*}
\mE\|aX+bY\|^p &= p\int_0^\infty t^{p-1}\mP(\|aX+bY\|>t)dt\\
&= p\int_0^\infty t^{p-1}\left(1-\mP(\|aX+bY\|\leq t)\right)dt.
\end{align*}
By the geometric interpretation \eqref{eq:def-I} of $\|I\|$ and Theorem \ref{thm:linear-combi},  we have
\begin{align*}
\mP(\|aX+bY\|\leq t) \leq \mP(\opnorm{aX+bY}\leq t\opnorm{I})\leq \mP(\opnorm{X-Y}\leq Ct),
\end{align*}
where the constant $C=2\opnorm{I}\max\left\{|a|^{-1}, |b|^{-1}\right\}$. Hence we have
\begin{align*}
\mE\|aX+bY\|^p &\geq p\int_0^\infty t^{p-1}\left(1-\mP(\opnorm{X-Y}\leq Ct)\right)dt\\
&= p\int_0^\infty t^{p-1}\mP(\opnorm{X-Y}> Ct)dt\\
&= C^{-p}\cdot\mE\opnorm{X-Y}^p,
\end{align*}
which is equivalent to the desired statement. For $p<0$, we have
\begin{align*}
\mE\|aX+bY\|^{p} &= -p\int_0^\infty t^{-p-1}\mP(\|aX+bY\|\leq t^{-1})dt\\
&\leq -p\int_0^\infty t^{-p-1}\mP(\opnorm{aX+bY}\leq \opnorm{I}t^{-1})dt\\
&\leq -p\int_0^\infty t^{-p-1}\mP(\opnorm{X-Y}\leq Ct^{-1})dt\\
&= C^{-p}\left(-p\int_0^\infty t^{-p-1}\mP(\opnorm{X-Y}\leq t^{-1})dt\right)\\
&= C^{-p}\cdot\mE\opnorm{X-Y}^{p}.
\end{align*}
The first inequality follows from the geometric interpretation of $\opnorm{I}$. We apply Theorem \ref{thm:linear-combi} in the second inequality. 
The statement follows by taking the $1/p$-th root.
\end{proof}

As an immediate consequence of Theorem \ref{thm:holder}, we have the following result. 

\begin{coro}\label{cor:blrs}
Let $(E, \|\cdot\|)$ denote a Banach space. For $\gamma>0$ and any pair of $E$-valued i.i.d. random variables $X$ and $Y$, we have
\begin{align}\label{eq:particular}
\mE\|X-Y\|^\gamma\leq2^\gamma\mE\|X+Y\|^\gamma
\end{align}
and
$$
\mE\|X+Y\|^{-\gamma}\leq2^\gamma\mE\|X-Y\|^{-\gamma}.
$$
\end{coro}

The positive moment case of Corollary \ref{cor:blrs} should be compared with a result of Buja, Logan, Reeds and Shepp \cite{BLRS94},
who showed that if $p$ and $\gamma$ are positive reals with $0< \gamma\leq p$ and $1\leq p\leq 2$, then
\begin{equation}\label{eq:blrs}
\mE \|X-Y\|_{p}^\gamma\leq\mE \|X+Y\|_{p}^\gamma
\end{equation}
holds for any i.i.d. random variables $X$ and $Y$ in $\mR^d$, where $\|\cdot\|_p$ refers to the
usual $\ell_p$-norm on $\mR^d$. The negative moment case of Corollary \ref{cor:blrs} should be compared with the recent result 
for the Euclidean norm of Gao \cite{Gao18}, who showed that for $0<\gamma<d$, 
\begin{align}\label{eq:gao}
\mE\|X+Y\|_2^{-\gamma}\leq\mE\|X-Y\|_2^{-\gamma}
\end{align}
holds for any i.i.d. random variables $X$ and $Y$ in $\mR^d$. Corollary \ref{cor:blrs} has the advantage that it works for any norm
and in infinite dimensions, but the inequalities \eqref{eq:blrs} and \eqref{eq:gao} 
have the advantage that when valid, they work with better constants.

Related results were also proved by Mattner \cite{Mat97}.
For $0<\gamma\leq 2$, Mattner \cite{Mat97} proved that if $T$ is any orthogonal linear transformation on $\mR^d$, then
$$
\mE \|X-Y\|_2^\gamma\leq\mE \|X-TY\|_2^\gamma,
$$
where $\|\cdot\|_2$ is the Euclidean norm. 

The question of the sharpness of Corollary \ref{cor:blrs} is interesting and open. 
For $p>2$, \cite{BLRS94} constructed i.i.d. random variables $X$ and $Y$ in $\mR^d$ such that
$$
\mE \|X-Y\|_{p}>\mE \|X+Y\|_{p},
$$
but the ratio their construction gives is just a little bit greater than 1, and so does not reveal whether the estimate \eqref{eq:particular} is tight when $\gamma=1$.

One can also consider a more general class of functions of sums and differences than norms.
A function $\varphi: V\rightarrow\mR$ is said to be {\it quasiconcave} if the super-level set 
$$\{x\in V: \varphi(x)> t \}$$ 
is convex for all $t\in\mR$. 
When $V=\mR$, this coincides with the notion of a {\it unimodal} function.

\begin{prop}\label{prop:varphi}
Let $a, b\in\mF$ be non-zero numbers and let $\varphi$ be a non-negative, symmetric, quasiconcave function on $V$. 
For any i.i.d. random variables $X$ and $Y$ taking values in $V$, we have
$$
\mE\varphi(aX+bY)\leq \mE\varphi\left(\frac{X-Y}{2\max\{|a|^{-1}, |b|^{-1}\}}\right).
$$
\end{prop}

\begin{proof}
Since $\varphi$ is non-negative,  we have
\begin{align*}
\mE\varphi(aX+bY)&=\int_0^\infty\mP\left(\varphi(aX+bY)>t\right)dt \\
&=\int_0^\infty \mP(aX+bY\in\varphi^{-1}(t, \infty))dt,
\end{align*}
where we denote by $\varphi^{-1}(t, \infty)$ the set $\{x\in V: \varphi(x)>t\}$. Since $\varphi$ is symmetric and quasiconcave, 
the set $\varphi^{-1}(t, \infty)$ is symmetric and convex.  Then we can apply Theorem \ref{thm:linear-combi} to have
$$
\mP(aX+bY\in\varphi^{-1}(t, \infty))\leq\mP\left(X-Y\in2\max\{|a|^{-1}, |b|^{-1}\}\varphi^{-1}(t, \infty)\right).
$$
This implies that
\begin{align*}
\mE\varphi(aX+bY) &\leq \int_0^\infty\mP\left(\frac{X-Y}{2\max\{|a|^{-1}, |b|^{-1}\}}\in\varphi^{-1}(t, \infty)\right)dt\\
&=\mE\varphi\left(\frac{X-Y}{2\max\{|a|^{-1}, |b|^{-1}\}}\right).
\end{align*}
\end{proof}

\begin{rmk} 
Taking $\varphi(x)=\|x\|^{p}$ for $p<0$ in Proposition \ref{prop:varphi}, we can recover the negative moment case of Theorem \ref{thm:holder} when the 
two norms are identical. It is natural (given the use of different norms in Theorem \ref{thm:holder}) to consider the comparison of $\mE\varphi(aX+bY)$ and $\mE\phi(X-Y)$ for 
distinct non-negative, symmetric, quasiconcave functions $\varphi$ and $\phi$. 
When $\varphi$ and $\phi$ are homogeneous, one can still apply Theorem \ref{thm:linear-combi} to prove results analogous to Proposition \ref{prop:varphi}. 
Without homogeneity we need information on the behavior of $N(a, b, \varphi^{-1}(t, \infty), \phi^{-1}(t, \infty))$, 
which is a function of $t>0$ rather than a constant; we therefore do not bother to consider this question further. 
\end{rmk}

We now give a moment inequality in the spirit of Theorem \ref{thm:katona-type}, where the
comparison is to the original distribution.

\begin{thm}\label{thm:2-1}
Let $p>0$. For any i.i.d. random variables $X$ and $Y$ taking values in $V$, we have
\begin{align}\label{eq:k-h}
\left(\mE\opnorm{X-Y}^p\right)^{1/p}\leq 2^{1+1/p}\opnorm{I}\cdot\left(\mE\|X\|^p\right)^{1/p}.
\end{align}
\end{thm}

\begin{proof}
We can assume that the right hand side of \eqref{eq:k-h} is finite. 
\begin{align*}
\mE\|X\|^p &= p\int_0^\infty t^{p-1}\mP(\|X\|>t)dt\\
&= p\int_0^\infty t^{p-1}\left(1-\mP(\|X\|\leq t)\right)dt.
\end{align*}
Using the geometric interpretation \eqref{eq:def-I} of $\opnorm{I}$ and Theorem \ref{thm:katona-type},  we have
$$
\mP(\|X\|\leq t) \leq \mP(\opnorm{X}\leq \opnorm{I}t) \leq (\mP(\opnorm{X-Y}\leq 2\opnorm{I} t))^{1/2}.
$$
Hence we have
\begin{align*}
\mE\|X\|^p &\geq p\int_0^\infty t^{p-1}\left(1-(\mP(\opnorm{X-Y}\leq 2\opnorm{I}t))^{1/2}\right)dt\\
&\geq \frac{p}{2}\int_0^\infty t^{p-1}\left(1-\mP(\opnorm{X-Y}\leq 2\opnorm{I} t)\right)dt\\
&= \frac{p}{2}\int_0^\infty t^{p-1}\mP(\opnorm{X-Y}> 2\opnorm{I}t)dt\\
&= 2^{-p-1}\opnorm{I}^{-p}\cdot\mE\opnorm{X-Y}^p.
\end{align*}
The statement follows by taking the $1/p$-th root. In the 2nd inequality, we use the simple fact that $1-t\leq 2(1-\sqrt t)$ for $0\leq t\leq 1$. 
\end{proof}

Combining with a result of Gu\'edon \cite{Gue99}, Theorems \ref{thm:holder} and \ref{thm:2-1} imply a reverse H\"{o}lder type inequality for log-concave random variables. 
A random variable $X$ taking values in a Banach space $(E, \|\cdot\|)$ is called {\it log-concave} if for any non-empty Borel sets $A, B\subseteq E$ and $0<\lambda<1$ we have
 $$
\mP(X\in \lambda A+(1-\lambda)B)\geq\mP(X\in A)^{\lambda}\mP(X\in B)^{1-\lambda} ,
$$
where $\lambda A+(1-\lambda)B$ stands for the Minkowski sum $\{\lambda a+(1-\lambda)b: a\in A, b\in B\}$.
Log-concave distributions are a large class, and include all Gaussian distributions, 
exponential distributions, and the uniform distribution over any compact convex set. 

A reverse H\"{o}lder inequality asserts the equivalence of higher and lower moments of random variables. 
There are many different varieties of reverse H\"older inequalities, such as Khinchine inequalities
or inequalities relating different $L^p$ norms of functions; the survey \cite{BFLM18} contains a discussion of
several of these classes. In particular, reverse H\"older inequalities have found
considerable use in recent years at the interface of probability theory and convex geometry
(see, e.g., \cite{GM11, BM11:it, BM11:aop, BM12:jfa, FMW16, FLM15}),
For our purposes, we focus on situations where there exists a constant $C(p, q)$ depending only on $q\geq p$ such that
$$
(\mE\|X\|^q)^{1/q}\leq C(p, q) (\mE\|X\|^p)^{1/p}
$$
holds for random variables $X$ in a normed measurable space. In general such an inequality does not hold,
but it does hold for random variables with log-concave distributions. 
For example, Borell \cite{Bor74} showed the equivalence between the $p$-th and $q$-th moments of log-concave random variables 
for $q\geq p\geq 1$, while Lata\l a \cite{Lat96} demonstrated that the constant $C(p, q)$ can be independent of $p$. 
Gu\'edon \cite{Gue99} established a reverse H\"{o}lder inequality involving negative moments, which we now describe.
Defining $M_0:=\lim_{p\to0}(\mE\|X\|^p)^{1/p}=\exp(\mE\log\|X\|)$, \cite{Gue99} showed that there is an absolute constant $C_p$ such that for any $-1<p\leq 0$, one has
$$
(\mE\|X\|^p)^{1/p}\geq C_pM_0 .
$$
In fact, for $-1<p\leq 0$, Gu\'edon \cite{Gue99} proved the explicit form
$$
(\mE\|X\|^p)^{1/p}\geq \frac{1+p}{4e}\mE\|X\|.
$$ 
For $p>0$, a corresponding result is not explicitly stated in the literature, but combining the estimates A (with $q=0$) and B in \cite[Theorem 4]{Gue99},
one can prove
$$
(\mE\|X\|^p)^{1/p}\leq C_pM_0 ,
$$
with
$$
C_p=\min_{0\leq x\leq 1}(1-x)^{-2}\left(x+\Gamma(p+1)(1-x)^{1/2}\left(\frac{2}{-\log(1-x)}\right)^p\right)^{1/p}.
$$

An important fact implied by Pr\'ekopa-Leindler inequality is that the sum and difference of independent log-concave random variables are still log-concave. 
Therefore it is reasonable to expect that one may have reverse H\"{o}lder-type inequalities relating $aX+bY$ and $X-Y$ for 
i.i.d. log-concave random variables $X$ and $Y$.

\begin{coro}\label{coro:reverse-holder}
Let $V$ be a real topological vector space associated with two equivalent norms $\|\cdot\|$ and $\opnorm{\cdot}$.
Let $a, b$ be reals such that not both of them are zero. Let $p, q$ be reals such that $0<p\leq q$ or $-1<p<q<0$.  There exists an absolute constant $C(a, b, p, q)$ such that
$$
(\mE\opnorm{X-Y}^q)^{1/q}\leq C(a, b, p, q)\cdot(\mE\|aX+bY\|^p)^{1/p}
$$
holds for any i.i.d. log-concave random variables $X$ and $Y$ taking values in $V$. 
\end{coro}

\begin{proof}
Firstly we assume that $a, b$ are non-zero. For $-1<p<q<0$, Gu\'edon's result implies the existence of a constant $C_p$ such that
$$
(\mE\opnorm{X-Y}^p)^{1/p}\geq C_p(\mE\opnorm{X-Y}^q)^{1/q}.
$$
Then the statement follows from Theorem \ref{thm:holder}. The case $0<p<q$ can be proved in the same manner. 
When one of $a, b$ is zero, the proof proceeds in the same way with the application of Theorem \ref{thm:2-1} instead.
\end{proof}

\begin{rmk} 
We point out that H\"{o}lder or reverse H\"{o}lder inequalities of the form,
$$
(\mE\|X+Y\|^q)^{1/q}\leq c\cdot(\mE\opnorm{X-Y}^p)^{1/p} ,
$$
do not hold, as can be seen by taking real-valued random variables $X$ and $Y$ to be independent and uniform on the interval $[n, n+1]$
with $n$ sufficiently large.
\end{rmk}

\subsection{Positive-definite functions}

This section is focused on the estimate of $\mE\varphi(aX+bY)$, where $\varphi$ is a positive-definite function. The study is independent of the small ball inequalities developed in the previous sections.

Let $G$ be an abelian group, and  $\varphi: G\rightarrow \mC$ be a  Hermitian function, i.e.,
a function satisfying $\overline{\varphi(x)}=\varphi(-x)$. The function $\varphi$ is said to be {\it positive-definite} if, for any $x_1,\cdots, x_n\in G$ and $c_1,\cdots, c_n\in\mC$, we have
$$
\sum_{i, j=1}^n\varphi(x_i-x_j)c_i\overline{c_j}\geq0.
$$
Similarly the Hermitian function $\varphi$ is said to be {\it negative-definite} if the reversed inequality holds under the condition $\sum_{i=1}^nc_i=0$. 
For example, for $0<p\leq2$, the function $e^{-\|x\|^p}$ is positive-definite over the Euclidean space $\mR^d$.  

Bochner's well-known theorem asserts that a continuous positive-definite function $\varphi$ on a locally compact abelian group $G$ can be uniquely represented as the Fourier transform of a positive finite Radon measure $\mu$ on the Pontryagin dual group $G^*$, i.e.,
\begin{align}\label{eq:Bochner}
\varphi(x) = \int_{G^*}\xi(x)\mu(d\xi).
\end{align}
The counterpart of Bochner's theorem is the L\'evy-Khinchin representation formula for a continuous negative-definite function $\varphi$ on $\mR^d$, i.e.,
\begin{align}\label{eq:lk-rep}
\varphi(x)=c+i\langle y_0, x\rangle+q(x)+\int_{\mR^d\backslash\{0\}}\left(1-e^{-i\langle x, y\rangle}-\frac{i\langle x, y\rangle}{1+\|y\|^2}\right)\mu(dy)
\end{align}
where $c\in\mR$, $y_0\in\mR^d$, $q(x)$ is some quadratic form on $\mR^d$ and $\mu$ is a L\'evy measure. The close relations between these two types of functions has been well studied. For example, a function $\varphi$ is negative-definite if and only if $e^{-t\varphi}$ is positive-definite for all $t>0$. This observation goes back to Schoenberg. They are also closely related to another important type of functions, the so-called completely monotone functions. We refer to \cite{Ber08, BCR84:book} for more details in this direction.

\begin{thm}\label{thm:pos-def}
Let $G$ be a locally compact abelian group and let $\varphi: G\rightarrow\mC$ be a continuous positive-definite function. For independent random variables $X$ and $Y$ taking values in $G$ and $m, n\in\mZ$, we have
$$
\left|\mE\varphi(mX+nY)\right|^2\leq\mE\varphi(mX-mX')\mE\varphi(nY-nY'),
$$
where $X'$ and $Y'$ are independent copies of $X$ and $Y$, respectively.
\end{thm}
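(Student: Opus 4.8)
The plan is to invoke Bochner's theorem \eqref{eq:Bochner}, which represents the continuous positive-definite function $\varphi$ as the Fourier transform $\varphi(x)=\int_{G^*}\xi(x)\,d\mu(\xi)$ of a positive finite Radon measure $\mu$ on the Pontryagin dual $G^*$, and then to reduce the claimed inequality to a Cauchy--Schwarz inequality in $L^2(\mu)$. First I would note that since every character $\xi\in G^*$ satisfies $|\xi(x)|=1$ for all $x$ and $\mu$ is finite, the integrand is uniformly bounded, so Fubini's theorem permits interchanging the expectation over $(X,Y)$ with the integral over $G^*$:
$$
\mE\varphi(mX+nY)=\int_{G^*}\mE\bigl[\xi(mX+nY)\bigr]\,d\mu(\xi).
$$
Because each $\xi$ is a homomorphism into the circle and $m,n\in\mZ$, we have $\xi(mX+nY)=\xi(X)^m\xi(Y)^n$, and independence of $X$ and $Y$ gives $\mE[\xi(mX+nY)]=f(\xi)g(\xi)$, where $f(\xi):=\mE[\xi(X)^m]$ and $g(\xi):=\mE[\xi(Y)^n]$. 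Hence $\mE\varphi(mX+nY)=\int_{G^*}f(\xi)g(\xi)\,d\mu(\xi)$.

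Next I would apply the Cauchy--Schwarz inequality with respect to the finite positive measure $\mu$ to get
$$
\left|\int_{G^*}f(\xi)g(\xi)\,d\mu(\xi)\right|^2\le\left(\int_{G^*}|f(\xi)|^2\,d\mu(\xi)\right)\left(\int_{G^*}|g(\xi)|^2\,d\mu(\xi)\right),
$$
and then identify the two factors on the right. Using $|\xi|\equiv 1$ and that $X'$ is an independent copy of $X$, one computes $|f(\xi)|^2=\mE[\xi(X)^m]\,\overline{\mE[\xi(X')^m]}=\mE[\xi(mX)\overline{\xi(mX')}]=\mE[\xi(mX-mX')]$. Integrating against $\mu$ and applying Fubini once more (the integrand again bounded by $1$) yields $\int_{G^*}|f(\xi)|^2\,d\mu(\xi)=\mE\varphi(mX-mX')$, and symmetrically $\int_{G^*}|g(\xi)|^2\,d\mu(\xi)=\mE\varphi(nY-nY')$. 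Substituting these into the Cauchy--Schwarz bound gives exactly the asserted inequality; as a side check, both factors are visibly real and nonnegative, consistent with the right-hand side of the statement.

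The conceptual content here is entirely Bochner plus Cauchy--Schwarz, so I expect the only genuinely delicate points to be the measurability of $\xi\mapsto\mE[\xi(X)^m]$ on $G^*$ and the applicability of Fubini; both follow from the joint measurability of $(\xi,x)\mapsto\xi(x)$ on $G^*\times G$ together with the uniform bound $|\xi(x)|=1$ and finiteness of $\mu$, using also that continuous positive-definite $\varphi$ is bounded (so all expectations are well defined). I anticipate this measure-theoretic bookkeeping, rather than any substantive difficulty, to be the main obstacle in writing out a fully rigorous proof.
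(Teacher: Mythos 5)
Your argument is correct and is essentially identical to the paper's: Bochner's representation, Fubini plus independence to factor $\mE\,\xi(mX+nY)$ as $\mE\,\xi(mX)\cdot\mE\,\xi(nY)$, Cauchy--Schwarz in $L^2(\mu)$, and the identification $|\mE\,\xi(mX)|^2=\mE\,\xi(mX-mX')$ followed by one more Fubini. The only cosmetic difference is that you flag the measurability/Fubini bookkeeping explicitly, which the paper leaves implicit.
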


\begin{proof}
Using Bochner's theorem \eqref{eq:Bochner}, we have
$$
|\mE\varphi(mX+nY)| = \left|\mE\int_{G^*}\xi(mX+nY)\mu(d\xi)\right|=\left|\int_{G^*}\mE\xi(mX)\mE \xi(nY)\mu(d\xi)\right|.
$$
The second equation follows from Fubini's theorem and the assumption that $X, Y$ are independent. By Cauchy-Schwartz inequality, we have
\begin{align*}
&\leq \left(\int_{G^*}|\mE\xi(mX)|^2d\mu(\xi)\int_{G^*}|\mE\xi(nY)|^2d\mu(\xi)\right)^{1/2}\\
&= \left(\int_{G^*}\mE\xi(mX) \overline{\mE\xi(mX)}\mu(d\xi) \int_{G^*}\mE\xi(nY) \overline{\mE\xi(nY)}\mu(d\xi)\right)^{1/2}\\
&= \left(\int_{G^*}\mE\xi(mX) \mE\overline{\xi(mX)}\mu(d\xi) \int_{G^*}\mE\xi(nY)\mE\overline{\xi(nY)}\mu(d\xi)\right)^{1/2}\\
&= \left(\int_{G^*}\mE\xi(mX)\mE\xi(-mX)\mu(d\xi) \int_{G^*}\mE\xi(nY)\mE\xi(-nY)\mu(d\xi)\right)^{1/2}\\
&= \left(\int_{G^*}\mE\xi(mX)\mE\xi(-mX')\mu(d\xi) \int_{G^*}\mE\xi(nY)\mE\xi(-nY')\mu(d\xi)\right)^{1/2}\\
&= \left(\mE\int_{G^*}\xi(mX-mX')\mu(d\xi) \mE\int_{G^*}\xi(nY-nY')\mu(d\xi)\right)^{1/2}\\
&= \left(\mE\varphi(mX-mX') \mE\varphi(nY-nY')\right)^{1/2}.
\end{align*}
We denote by $\overline{\mE\xi(mX)}$ the conjugate of $\mE\xi(mX)$. The equation $\overline{\xi(mX)}=\xi(-mX)$ follows from the fact that $\xi\in G^*$.
\end{proof}

Let $V$ be a locally compact topological vector space over a field $\mF$. Repeating the above argument we can prove the following result.

\begin{thm}
Let $\varphi: V\rightarrow\mC$ be a continuous positive-definite function. For independent random variables $X$ and $Y$ taking values in $V$ and $a, b\in\mF$, we have
$$
\left|\mE\varphi(aX+bY)\right|^2\leq\mE\varphi(aX-aX')\mE\varphi(bY-bY'),
$$
where $X'$ and $Y'$ are independent copies of $X$ and $Y$, respectively.
\end{thm}

Taking $m=n=1$ and $m=1, n=0$ in Theorem \ref{thm:pos-def}, we have the following easy consequences.

\begin{coro}
Let $G$ be a locally compact abelian group and let $\varphi: G\rightarrow\mC$ be a continuous positive-definite function. For i.i.d. random variables $X$ and $Y$ taking values in $G$, we have
$$
|\mE\varphi(X+Y)|\leq\mE\varphi(X-Y).
$$
\end{coro}

\begin{coro}
Let $G$ be a locally compact abelian group and let $\varphi: G\rightarrow\mC$ be a continuous positive-definite function such that $\varphi(0)=1$. For any random variable $X$ taking values in $G$, we have
$$
|\mE\varphi(X)|^2\leq \mE\varphi(X-X'),
$$
where $X'$ is an independent copy of $X$.
\end{coro}

\begin{rmk}
Let $\varphi: \mR^d\rightarrow\mR$ be a continuous negative-definite function. It is shown in \cite{LST13} that for any i.i.d. random variables $X$ and $Y$
$$
\mE\varphi(X-Y)\leq\mE\varphi(X+Y).
$$
Moreover, they showed that $\mE\varphi(X+Y)-\mE\varphi(X-Y)$ is the variance of an integrated centered Gaussian process by employing the L\'evy-Khinchin representation \eqref{eq:lk-rep}. 
\end{rmk}

\end{document}